\newtheorem{defn}{Definition}[section]
\newtheorem{theorem}{Theorem}[section]
\newtheorem{prop}{Proposition}[section]
\newtheorem{lemma}{Lemma}[section]
\newtheorem{remark}{Remark}[section]
\newcommand{\ml}{\mathcal}
\newcommand{\mb}{\mathbb}
\title{A competition on blow-up for semilinear wave equations with scale-invariant damping and nonlinear memory term}
\author{Wenhui Chen\thanks{Wenhui Chen (wenhui.chen.math@gmail.com)}}
\author[2,3]{Ahmad Z. Fino\thanks{Ahmad Z. Fino (ahmad.fino01@gmail.com; a.fino@squ.edu.om)}}
\affil[1]{School of Mathematical Sciences, Shanghai Jiao Tong University, Shanghai, China}
\affil[2]{Department of Mathematics, College of Science, Sultan Qaboos University, Muscat, Oman}
\affil[3]{Department of Mathematics, Faculty of Sciences, Lebanese University, Lebanon}
\date{}
\begin{document}

\maketitle
\begin{abstract}
	\medskip
In this paper, we investigate blow-up of solutions to semilinear wave equations with scale-invariant damping and nonlinear memory term in $\mathbb{R}^n$, which can be represented by the Riemann-Liouville fractional integral of order $1-\gamma$ with $\gamma\in(0,1)$. Our main interest is to study mixed influence from damping term and the  memory kernel on blow-up conditions for the power of nonlinearity, by using test function method or generalized Kato's type lemma. We find a new competition, particularly for the small value of $\gamma$, on the blow-up range between the effective case and the non-effective case.\\
	
	\noindent\textbf{Keywords:} semilinear wave equation, scale-invariant damping, power nonlinearity, nonlinear memory,  Riemann-Liouville fractional integral, blow-up.\\
	
	\noindent\textbf{AMS Classification (2020)} 35B44, 35L05, 35L71, 26A33, 35B33
\end{abstract}
\fontsize{12}{15}
\selectfont
%\tableofcontents
\section{Introduction}
\subsection{Background of wave equations with scale-invariant damping}

In the last two decades, the wave equations with scale-invariant damping have caught a lot of attention. Let us begin with the corresponding linear Cauchy problem with vanishing right-hand side as follows:
\begin{align}\label{Eq_Linear_Wave_Scale}
	\begin{cases}
		\displaystyle{u_{tt}-\Delta u+\frac{\mu}{1+t}u_t=0,}&x\in\mb{R}^n,\ t>0,\\
		(u,u_t)(0,x)=(u_0,u_1)(x),&x\in\mb{R}^n,
	\end{cases}
\end{align}
where $\mu\in(0,\infty)$. According to the classification introduced by \cite{Wirth2004}, thanks to the hyperbolic scaling $
\tilde{u}(t,x):=u(\sigma(t+1)-1,\sigma x)
$
with $\sigma\in(0,\infty)$, the unknown $\tilde{u}=\tilde{u}(t,x)$ satisfies the same wave equation with the damping term, which is the so-called \emph{scale-invariant}. Indeed, the behavior of the solutions to \eqref{Eq_Linear_Wave_Scale} is mainly determined by the parameter $\mu$ in the damping mechanism, which provides a threshold between the \emph{effective damping} and the \emph{non-effective damping}. Here, the effective damping stands for its solution somehow having the behavior of the corresponding parabolic equation, and the non-effective damping means its solution somehow having the behavior of the free wave equation. 

We now turn to the semilinear Cauchy problem with the power source nonlinear term
\begin{align}\label{Eq_Semi_Wave_Scale_Power}
	\begin{cases}
		\displaystyle{u_{tt}-\Delta u+\frac{\mu}{1+t}u_t=|u|^p,}&x\in\mb{R}^n,\ t>0,\\
		(u,u_t)(0,x)=(u_0,u_1)(x),&x\in\mb{R}^n.
	\end{cases}
\end{align}
To introduce the previous researches logically on \eqref{Eq_Semi_Wave_Scale_Power}, we will separate our review according to the value of $\mu$: the \emph{parabolic-like model} for large value of $\mu$ and the \emph{hyperbolic-like model} for small value of $\mu$. To be specific, the parabolic-like model represents the obtained results are related to those for the semilinear heat model, and the hyperbolic-like model stands for the derived results are related to those for the semilinear wave model.
\begin{itemize}
	\item Employing test function method (see \cite{Zhang2001}), the author of \cite{Wakasugi2014} proved blow-up of solutions providing that $1<p\leqslant p_{\mathrm{Fuj}}(n+\mu-1)$ if $\mu\in(0,1]$, and $1<p\leqslant p_{\mathrm{Fuj}}(n)$ if $\mu\in(1,\infty)$,
	in which 
	\begin{align*}
		p_{\mathrm{Fuj}}(n):=1+\frac{2}{n},
	\end{align*}
	is the so-called \emph{Fujita exponent}. The Fujita exponent is the critical exponent for semilinear heat equations. Later, \cite{DAbbicco2015} derived the global existence result providing that $p_{\mathrm{Fuj}}(n)<p\leqslant \frac{n}{(n-2)_+}$, with the value of $\mu$ such that $\mu\geqslant \frac{5}{3}$ for $n=1$, $\mu\geqslant 3$ for $n=2$, and $\mu\geqslant n+2$ for $n\geqslant 3$. Therefore, these results show the critical exponent for \eqref{Eq_Semi_Wave_Scale_Power} is the Fujita exponent with a large parameter $\mu$. 
	\item Let us consider the ``not large'' value of $\mu$. In the special case $\mu=2$, the authors of \cite{DAbbicco-Lucente-Reissig2015} proved that for $n=1,2,3,$ the critical exponent is given by the competition $\max\{p_{\mathrm{Fuj}}(n),p_{\mathrm{Str}}(n+2)\}$, where the so-called \emph{Strauss exponent} $p_{\mathrm{Str}}(n)$ is the critical exponent for semilinear wave equations. To be specific, the Strauss exponent is written by
	\begin{align*}
		p_{\mathrm{Str}}(n):=\frac{n+1+\sqrt{n^2+10n-7}}{2(n-1)},
	\end{align*}
	and it is the positive root of the quadratic equation $(n-1)p^2-(n+1)p-2=0$. In the same year, global existence results were extended for some $p>p_{\mathrm{Str}}(n)$ in odd dimensions $n\geqslant 5$ in  \cite{DAbbicco-Lucente2015}. The global existence results for general dimensional cases were derived in \cite{Palmieri2019,Palmieri201902}. Recently, \cite{Lai-Takamura-Wakasa2017} found a shifted Strauss exponent for the blow-up results, to be specific, $p_{\mathrm{Fuj}}(n)\leqslant p<p_{\mathrm{Str}}(n+2\mu)$ when $\mu\in(0,\frac{\mu^*}{2})$, where $\mu^*=\frac{n^2+n+2}{n+2}$. By the aid of hypergeometric functions motivated by \cite{Zhou-Han2014}, the authors of \cite{Ikeda-Sobajima2018} got the sharper blow-up results if $p_{\mathrm{Fuj}}(n)\leqslant p\leqslant p_{\mathrm{Str}}(n+\mu)$ when $\mu\in(0,\mu^*)$.
	It was also conjectured that the critical exponent could be $p_{\mathrm{Str}}(n+\mu)$ for some small value of $\mu$. Under $1<p\leqslant p_{\mathrm{Str}}(n+\mu)$ with $\mu\in(0,\infty)$, the lifespan estimates are improved in the papers \cite{Tu-Lin201701,Tu-Lin201702,Palmieri-Tu2019} by applying iteration argument associated with modified Bessel functions. Concerning other studies on semilinear scale-invariant damped wave equations, we refer to \cite{Lai2018,Kato-Takamura-Wakasa2019,Imai-Kato-Takamura-Wakasa2019,Lai-Schiavone-Takamura2020,Hamouda-Hamaz=2021,Lai-Zhou=2021,Kitamura-Morisawa-Takamura=2022}.
\end{itemize}

\subsection{Background of wave equations with nonlinear memory term}
We now recall some studies for semilinear damped wave equations with nonlinear memory term. Let us start by the case of constant coefficient as follows:
\begin{align}\label{Eq_Semi_Damped_Constant_Wave_Memory}
	\begin{cases}
		u_{tt}-\Delta u+u_t=N_{\gamma,p}[u],&x\in\mb{R}^n,\ t>0,\\
		(u,u_t)(0,x)=(u_0,u_1)(x),&x\in\mb{R}^n,
	\end{cases}
\end{align}
where the nonlinear term on the right-hand side of the equation is the Riemann-Liouville fractional integral of order $1-\gamma$ with the $p$ power of the solution, i.e.
\begin{align}\label{memory nonlinearity}
	N_{\gamma,p}[u]:=c_{\gamma}\int_0^t(t-\tau)^{-\gamma}|u(\tau, x)|^p\mathrm{d}\tau \ \ \mbox{carrying} \ \ c_{\gamma}:=\frac{1}{\Gamma(1-\gamma)},
\end{align}
where $p>1$, $\gamma\in(0,1)$ and $\Gamma$ denotes the Euler integral of the second kind. According to the recent studies of \cite{Fino2011,Berbiche2013,DAbb14,Berbiche2015}, the critical exponent to \eqref{Eq_Semi_Damped_Constant_Wave_Memory} is given by the competition such that $\max\{p_{\gamma}(n),\frac{1}{\gamma}\}$, where we wrote
\begin{align*}
	p_{\gamma}(n):=1+\frac{2(2-\gamma)}{(n-2(1-\gamma))_+}.
\end{align*}
Later, the authors of \cite{Dannawi-Kirane-Fino2018} investigated blow-up of solutions to
\begin{align}\label{Eq_Semi_Damped_Variable_Wave_Memory}
	\begin{cases}
		u_{tt}-\Delta u+a(x)b(t)u_t=N_{\gamma,p}[u],&x\in\mb{R}^n,\ t>0,\\
		(u,u_t)(0,x)=(u_0,u_1)(x),&x\in\mb{R}^n,
	\end{cases}
\end{align}
where the time-space-dependent coefficient in the damping term is defined by
\begin{align}\label{For_Time_Space}
	a(x)b(t):=a_0(1+|x|^2)^{-\frac{\alpha}{2}}(1+t)^{-\beta},
\end{align}
with $a_0>0$, $\alpha,\beta\geqslant0$ and $\alpha+\beta<1$. Roughly speaking, \cite{Dannawi-Kirane-Fino2018} just considered the effective case due to $\alpha+\beta<1$. Furthermore, some blow-up and global existence results for $\alpha=0$ and $\beta\in(-1,1)$ were obtained in the recent papers \cite{Haj-Reissig-2020} and \cite{Haj-Reissig=2021}, respectively. To the best of the authors' knowledge, so far it is still unknown for the existence/nonexistence of global (in time) solutions for the scale-invariant case of time-dependent type, i.e. \eqref{For_Time_Space} with $a_0=\mu$, $\alpha=0$ and $\beta=1$. In this paper, we will give an answer of blow-up  of solutions, where we may observe a competition between the behavior for the parabolic-like model and the hyperbolic-like model influenced by the relaxation function in the nonlinear memory term.

Finally, we recall the recent result for the semilinear wave equation with nonlinear memory term
\begin{align}\label{Eq_Semi_Wave_Memory}
	\begin{cases}
		\displaystyle{u_{tt}-\Delta u=N_{\gamma,p}[u],}&x\in\mb{R}^n,\ t>0,\\
		(u,u_t)(0,x)=(u_0,u_1)(x),&x\in\mb{R}^n.
	\end{cases}
\end{align}
The authors of \cite{Chen-Palmieri201903} proved blow-up of energy solutions to \eqref{Eq_Semi_Wave_Memory} if $p>1$ for $n=1$ and $1<p\leqslant p_0(n,\gamma)$ for $n\geqslant 2$, where $p=p_0(n,\gamma)$ solves $(n-1) p^2-\left(n+3-2\gamma\right)p -2=0$, with $\gamma\in(0,1)$ and $p>1$. Here, for any $n\geqslant 2$ we denote by $p_0(n,\gamma)$ the positive root of the last  quadratic equation
\begin{align}\label{p0(n,g)}
	p_0(n,\gamma):=\frac{n+3-2\gamma+\sqrt{n^2+(14-4\gamma)n+(3-2\gamma)^2-8}}{2(n-1)}.
\end{align} In the case $n=1$ we set formally  $p_0(1,\gamma)= \infty$. This is a generalized Strauss exponent and satisfies $\lim_{\gamma\to 1^-}p_0(n,\gamma)=p_{\mathrm{Str}}(n)$ for all $n\geqslant2$. Furthermore, the research concerning blow-up of solutions to \eqref{Eq_Semi_Wave_Memory} with general nonlinear memory terms $g\ast|u|^p$ has been done recently in \cite{Chen-2020-04}.

\subsection{Main purpose of this paper}
In the present work, we study blow-up of solutions to the Cauchy problem for semilinear wave equations with scale-invariant damping of time-dependent type as well as nonlinear memory term, namely,
\begin{align}\label{Eq_Semi_Wave_Scale_Memory}
	\begin{cases}
		\displaystyle{u_{tt}-\Delta u+\frac{\mu}{1+t}u_t=N_{\gamma,p}[u],}&x\in\mb{R}^n,\ t>0,\\
		(u,u_t)(0,x)=(u_0,u_1)(x),&x\in\mb{R}^n,
	\end{cases}
\end{align}
with $\mu\in(0,\infty)$ and $\gamma\in(0,1)$, where the nonlinearity has been defined by \eqref{memory nonlinearity}. Especially, we are interested in the influence from various kinds of damping term (dominant by parameter $\mu$) and different behaviors of nonlinear memory kernel (dominant by parameter $\gamma$) on blow-up conditions describing by the upper bounds of the exponent $p$. 

Our novelty is a new competition of the hyperbolic-like model and the parabolic-like model. For example, even for small $\mu$ (i.e. the hyperbolic-like classification to the classical model \eqref{Eq_Semi_Wave_Scale_Power}), we still may understand our aim model \eqref{Eq_Semi_Wave_Scale_Memory} as the parabolic-like one when the parameter $\gamma$ is small. That is to say that the memory effect in the nonlinearity will exert crucial influence on the behaviors for the semilinear Cauchy problem. To achieve our goal, we will apply suitable test function methods associated with a flexible time-scaling parameter to obtain blow-up result under the parabolic-like situation. In addition, after proving generalized Kato's type lemma, we demonstrate blow-up result under the hyperbolic-like situation by employing iteration methods.

\medskip

\noindent\textbf{Notation:}\ \  We give some notations to be used in this paper. $f\lesssim g$ means that there exists a positive constant $C$ such that $f\leqslant Cg$.
% We write $f\lesssim g$ when there exists a positive constant $C$ such that $f\leqslant Cg$. We denote $g\lesssim f \lesssim g$ by $f\approx g$. Moreover, 
$B_R$ denotes the ball around the origin with radius $R$ in $\mathbb{R}^n$. Moreover, $(a)_+:=\max\{a,0\}$ stands for the positive part of the real number.

\section{Main results}
\subsection{Blow-up result for the parabolic-like model}
According to the recent paper \cite{Dannawi-Kirane-Fino2018}, one may derive local (in time)  well-posedness for \eqref{Eq_Semi_Wave_Scale_Memory} in the following sense.
\begin{lemma}\label{Lem_DKF}
	Let $n\geqslant 1$, $\gamma\in(0,1)$ and $1<p\leqslant \frac{n}{(n-2)_+}$. Let us consider initial data $u_0\in H^1(\mb{R}^n)$ and $u_1\in L^2(\mb{R}^n)$. Then, there is a unique maximal mild solution $u=u(t,x)$ to the Cauchy problem \eqref{Eq_Semi_Wave_Scale_Memory} such that
	\begin{align*}
		u\in\ml{C}\big([0,T),H^1(\mb{R}^n)\big)\cap \ml{C}^1\big([0,T),L^2(\mb{R}^n)\big),
	\end{align*}
	where $0<T\leqslant\infty$. Particularly, we say $u$ is a global (in time) solution to \eqref{Eq_Semi_Wave_Scale_Memory} if $T=\infty$, while in the case $T<\infty$, we say $u$ blows up in finite time, namely 
	\begin{align*}
		\|u(t,\cdot)\|_{H^1}+\|u_t(t,\cdot)\|_{L^2}\rightarrow\infty\quad\mbox{as}\quad
		t\rightarrow T_{\max}.
	\end{align*}
Additionally, if we assume $\mathrm{supp}\,u_0,u_1\subset B_R$, then we claim that $\mathrm{supp}\,u(t,\cdot)\subset B_{R+t}$ for some $R>0$.
\end{lemma}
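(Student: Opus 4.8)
The plan is to follow the by-now standard scheme for damped wave equations with nonlinear memory (as in \cite{Dannawi-Kirane-Fino2018}, with the space-time coefficient $a(x)b(t)$ replaced by the purely time-dependent $\mu/(1+t)$): reformulate \eqref{Eq_Semi_Wave_Scale_Memory} by Duhamel's principle and run a Banach contraction argument in the energy space $X_T:=\ml{C}\big([0,T],H^1(\mb{R}^n)\big)\cap\ml{C}^1\big([0,T],L^2(\mb{R}^n)\big)$. On any finite interval $[0,T]$ the coefficient $\mu/(1+t)$ is smooth and bounded, so the linear Cauchy problem $v_{tt}-\Delta v+\frac{\mu}{1+t}v_t=F$, $(v,v_t)(0,\cdot)=(u_0,u_1)$, is well posed in $H^1\times L^2$ and, by a routine energy estimate, its solution satisfies
\[
\|v(t,\cdot)\|_{H^1}+\|v_t(t,\cdot)\|_{L^2}\lesssim \|u_0\|_{H^1}+\|u_1\|_{L^2}+\int_0^t\|F(s,\cdot)\|_{L^2}\,\mathrm{d}s
\]
uniformly for $t\in[0,T]$. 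Let $\Phi[u]$ denote the solution of this linear problem with $F=N_{\gamma,p}[u]$; a mild solution of \eqref{Eq_Semi_Wave_Scale_Memory} is a fixed point of $\Phi$, and I would look for it in a closed ball of $X_T$.

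The heart of the matter is the nonlinear estimate. By Minkowski's integral inequality,
\[
\|N_{\gamma,p}[u](t,\cdot)\|_{L^2}\leqslant c_\gamma\int_0^t(t-\tau)^{-\gamma}\,\big\||u(\tau,\cdot)|^p\big\|_{L^2}\,\mathrm{d}\tau=c_\gamma\int_0^t(t-\tau)^{-\gamma}\|u(\tau,\cdot)\|_{L^{2p}}^p\,\mathrm{d}\tau,
\]
and the hypothesis $1<p\leqslant\frac{n}{(n-2)_+}$ is precisely what ensures the Sobolev embedding $H^1(\mb{R}^n)\hookrightarrow L^{2p}(\mb{R}^n)$, whence $\|u(\tau,\cdot)\|_{L^{2p}}^p\lesssim\|u(\tau,\cdot)\|_{H^1}^p$; since $\gamma\in(0,1)$ the kernel $(t-\tau)^{-\gamma}$ is integrable, giving $\|N_{\gamma,p}[u](t,\cdot)\|_{L^2}\lesssim t^{1-\gamma}\|u\|_{X_T}^p$ and hence $\int_0^T\|N_{\gamma,p}[u](s,\cdot)\|_{L^2}\,\mathrm{d}s\lesssim T^{2-\gamma}\|u\|_{X_T}^p$. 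Combined with the linear energy inequality this yields $\|\Phi[u]\|_{X_T}\lesssim \|u_0\|_{H^1}+\|u_1\|_{L^2}+CT^{2-\gamma}\|u\|_{X_T}^p$. For the difference estimate one uses $\big||a|^p-|b|^p\big|\lesssim(|a|^{p-1}+|b|^{p-1})|a-b|$ with Hölder's inequality and the same embedding to get $\|\Phi[u]-\Phi[v]\|_{X_T}\lesssim CT^{2-\gamma}\big(\|u\|_{X_T}^{p-1}+\|v\|_{X_T}^{p-1}\big)\|u-v\|_{X_T}$. Choosing the radius of the ball proportional to $\|u_0\|_{H^1}+\|u_1\|_{L^2}$ and then $T$ sufficiently small, both the self-mapping and the contraction properties hold, and Banach's fixed-point theorem produces a unique local mild solution in $X_T$.

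To pass to a maximal solution one iterates. The one genuine subtlety, which I expect to be the main obstacle, is that $N_{\gamma,p}[u]$ is nonlocal in time: restarting from a later time $t_0$ forces the splitting $\int_0^t=\int_0^{t_0}+\int_{t_0}^t$, where the first piece is a fixed inhomogeneity built from the already-constructed solution — it is continuous in $t$ with values in $L^2$ and bounded on compact intervals, since the singularity of $(t-\tau)^{-\gamma}$ only improves once $t$ moves past $t_0$ — and is harmlessly absorbed into the source term of the linear problem, while the second piece is treated as above, with a uniform length of existence depending only on $\sup_{[0,t_0]}\big(\|u(t,\cdot)\|_{H^1}+\|u_t(t,\cdot)\|_{L^2}\big)$. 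This gives the blow-up alternative: if $T_{\max}<\infty$ then $\|u(t,\cdot)\|_{H^1}+\|u_t(t,\cdot)\|_{L^2}\to\infty$ as $t\to T_{\max}$, for otherwise the uniform-step continuation would extend the solution past $T_{\max}$. Finally, the support statement follows from finite speed of propagation: the damping is of lower order and the memory nonlinearity $N_{\gamma,p}[u](t,x)$ depends only on the values $u(\tau,x)$, $\tau<t$, at the same spatial point, so it does not enlarge spatial supports; a standard energy argument on backward light cones of slope one (or approximation by smooth solutions) gives $\mathrm{supp}\,u(t,\cdot)\subset B_{R+t}$ whenever $\mathrm{supp}\,u_0,u_1\subset B_R$, exactly as in \cite{Dannawi-Kirane-Fino2018}.
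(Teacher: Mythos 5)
Your proposal is correct; note that the paper itself offers no proof of this lemma at all, merely asserting it ``according to \cite{Dannawi-Kirane-Fino2018}'', and your reconstruction is exactly the standard contraction argument used in that reference (energy estimate for the linear damped equation with bounded coefficient $\mu/(1+t)$, Sobolev embedding $H^1\hookrightarrow L^{2p}$ under $p\leqslant n/(n-2)_+$, integrability of the kernel $(t-\tau)^{-\gamma}$ for $\gamma\in(0,1)$). You also correctly identify and resolve the one real subtlety, namely that the memory term is nonlocal in time so the continuation/blow-up alternative requires splitting $\int_0^t=\int_0^{t_0}+\int_{t_0}^t$ and absorbing the history into the inhomogeneity, which is the point a careless adaptation of the memoryless argument would miss.
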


In order to describe our first blow-up result, we introduce 
\begin{align*}
	p_1(n):=1+\frac{3-\gamma}{n-1+\gamma}\ \ &\mbox{and}\ \ p_2(n,d_0):=1+\frac{2-\gamma}{(nd_0-2+\gamma)_+}\ \ \mbox{if}\ \ \mu\in(1,\infty),\\
	p_3(n):=1+\frac{3-\gamma}{(n+\mu+\gamma-2)_+}\ \ &\mbox{and}\ \ p_4(n,d_1):=1+\frac{2-\gamma}{(nd_1-2+\gamma)_+}\ \ \mbox{if}\ \ \mu\in(0,1],
\end{align*}
carrying the parameters $d_k=d_k(n)$ with $k=0,1$ such that
\begin{align*}
d_0(n):=\frac{1}{4}+\sqrt{\frac{1}{16}+\frac{2-\gamma}{n}}\ \ \mbox{as well as}\ \ d_1(n):=\frac{1}{4}+\sqrt{\frac{1}{16}+\frac{(\mu+1)(2-\gamma)}{2n}}.
\end{align*}
\begin{remark}\label{Rem_Fujita}
	Indeed, we observe two limits that
	\begin{align*}
		\lim\limits_{\gamma\to 1^-}p_1(n)=p_{\mathrm{Fuj}}(n)\ \ \mbox{and}\ \ 	\lim\limits_{\gamma\to 1^-}p_3(n)=p_{\mathrm{Fuj}}(n+\mu-1).
	\end{align*}
	By taking the consideration of
	\begin{align*}
		\lim_{\gamma\to 1^{-}} c_\gamma s_+^{-\gamma} = \delta_0(s) \ \ \mbox{in the sense of distributions with} \ \  s_+^{-\gamma}:= \begin{cases} s^{-\gamma} & \mbox{if} \ \ s>0, \\ 0 & \mbox{if} \ \ s<0, \end{cases}
	\end{align*}
with the formal motivation of the nonlinear term
\begin{align*}
 \lim\limits_{\gamma\to 1^{-}}N_{\gamma,p}[u]= \lim\limits_{\gamma\to 1^{-}}c_{\gamma}t^{-\gamma}\ast|u|^p=|u|^p,
\end{align*}
 two exponents $p_1(n)$, $p_3(n)$ in the above are the natural extensions of those in \cite{Wakasugi2014} when $\mu\in(1,\infty)$ and $\mu\in(0,1]$, respectively.
\end{remark}
\begin{theorem}\label{Thm_Blow_Up_mu_1}
	Let us assume that
	\begin{align*}
		1<p\leqslant\begin{cases}
		p_1(n)&\mbox{if}\ \ n=1,2,\\
		\min\{p_1(3),p_2(3,d_0)\}&\mbox{if}\ \ n=3,\\
		\min\{p_2(n,d_0),\frac{n}{n-2}\}&\mbox{if}\ \ n\geqslant4,
		\end{cases}
	\end{align*}
when $\mu\in(1,\infty)\setminus\{2\}$; 
$$1<p\leqslant	\min\left\{p_1(n),\frac{n}{(n-2)_+}\right\},$$
when $\mu=2$; and
\begin{align*}
1<p\leqslant\begin{cases}
p_3(1)&\mbox{if}\ \ n=1,\\
\min\{p_3(2),p_4(2,d_1) \}&\mbox{if}\ \ n=2,\\
\min\{p_3(3),p_4(3,d_1), 3\}&\mbox{if}\ \ n=3,\\
\min\{p_4(n,d_1),\frac{n}{n-2}\}&\mbox{if}\ \ n\geqslant4,
\end{cases}
\end{align*}	
when $\mu\in(0,1]$;	for all $n\geqslant 1$ and $\gamma\in(0,1)$. Let us consider initial data $u_0\in H^1(\mb{R}^n)\cap L^1(\mb{R}^n)$ and $u_1\in L^2(\mb{R}^n)\cap L^1(\mb{R}^n)$ satisfying 
\begin{align*}
\int_{\mb{R}^n}\big(u_1(x)-(\mu-1)u_0(x)\big)\mathrm{d}x>0\ \ &\mbox{if} \ \ \mu\in(1,\infty),\\
 \int_{\mb{R}^n}u_1(x)\mathrm{d}x>0\ \ &\mbox{if}\ \ \mu\in(0,1].
\end{align*}
Then, the mild solution to the Cauchy problem \eqref{Eq_Semi_Wave_Scale_Memory} blows up in finite time.
\end{theorem}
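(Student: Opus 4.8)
The plan is to prove Theorem~\ref{Thm_Blow_Up_mu_1} by a test function argument carrying a free time-scaling parameter, in the spirit of \cite{Zhang2001,Wakasugi2014,Dannawi-Kirane-Fino2018}, combined with the fractional integration by parts natural for the Riemann--Liouville memory. First I would record the weak formulation: testing \eqref{Eq_Semi_Wave_Scale_Memory} against $(1+t)^{\mu}\psi$ for $\psi\in\ml{C}^\infty_c([0,\infty)\times\mb{R}^n)$ and using the factorisation $\partial_t^2+\tfrac{\mu}{1+t}\partial_t=(1+t)^{-\mu}\partial_t\big((1+t)^{\mu}\partial_t\,\cdot\,\big)$, one spatial and two temporal integrations by parts yield
\begin{align*}
\int_0^\infty\!\!\int_{\mb{R}^n}u\,(1+t)^{\mu}\big(\psi_{tt}+\tfrac{\mu}{1+t}\psi_t-\Delta\psi\big)\,\mathrm{d}x\,\mathrm{d}t+\int_{\mb{R}^n}\big(u_0\psi_t(0,\cdot)-u_1\psi(0,\cdot)\big)\,\mathrm{d}x=\int_0^\infty\!\!\int_{\mb{R}^n}N_{\gamma,p}[u]\,(1+t)^{\mu}\psi\,\mathrm{d}x\,\mathrm{d}t,
\end{align*}
and by Fubini and \eqref{memory nonlinearity} the right-hand side equals $\iint|u(\tau,x)|^p\,\ml{J}_\psi(\tau,x)\,\mathrm{d}x\,\mathrm{d}\tau$, where $\ml{J}_\psi(\tau,x):=c_\gamma\int_\tau^\infty(t-\tau)^{-\gamma}(1+t)^{\mu}\psi(t,x)\,\mathrm{d}t\geqslant0$.

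Next I would fix the test function $\psi_R(t,x):=\chi(t)\,\eta(t/R^{\sigma})^{\kappa}\,\Phi(|x|/R)^{\kappa}$, with $\eta,\Phi$ standard cut-offs equal to $1$ near $0$, $\kappa$ large, $\sigma>0$ the \emph{flexible time-scaling parameter}, and $\chi$ in the kernel of $\partial_t^2+\tfrac{\mu}{1+t}\partial_t$: concretely $\chi\equiv1$ for $\mu\in(0,1]$ and $\chi(t)=2-(1+t)^{1-\mu}$ (bounded, positive, with $\chi(0)=1$ and $\chi'(0)=\mu-1$) for $\mu>1$. This choice serves two purposes. Since $\chi$ is annihilated by the time operator, in $(\psi_R)_{tt}+\tfrac{\mu}{1+t}(\psi_R)_t$ every derivative falls on a cut-off, so that term is supported in $\{t\sim R^{\sigma}\}$ while $\Delta\psi_R$ is supported in $\{|x|\sim R\}$; finite speed of propagation (Lemma~\ref{Lem_DKF}) keeps $\mathrm{supp}\,u(t,\cdot)$ within $\{t\lesssim R^{\sigma},\ |x|\lesssim R\}$ on these regions and places the data in $B_R$ once $R$ is large. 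Moreover the $t=0$ boundary term equals $-\int_{\mb{R}^n}(u_1-(\mu-1)u_0)\,\mathrm{d}x$ for $\mu>1$ and $-\int_{\mb{R}^n}u_1\,\mathrm{d}x$ for $\mu\in(0,1]$, i.e. a constant $-b$ with $b>0$ exactly under the hypotheses of the theorem.

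Writing $Y_R:=\iint|u|^p\,\ml{J}_{\psi_R}$, the weak identity becomes $Y_R+b=\iint u\,(1+t)^{\mu}\big((\psi_R)_{tt}+\tfrac{\mu}{1+t}(\psi_R)_t-\Delta\psi_R\big)$. Applying Hölder's inequality on the two support regions above against the weight $\ml{J}_{\psi_R}$, and tracking the scalings — the weight $(1+t)^{\mu}$, the two second-order losses $R^{-2\sigma}$ and $R^{-2}$, the factor $R^{\sigma(1-\gamma)}$ produced on $\ml{J}_{\psi_R}$ by the Riemann--Liouville integral, and the volumes $R^n$ and $R^{\sigma}$ — one reaches $Y_R+b\leqslant C\,R^{A(n,\mu,\gamma,\sigma,p)}\,Y_R^{1/p}$, hence $Y_R\leqslant C^{p'}R^{Ap'}$ and $b\leqslant C^{p'}R^{Ap'}$ with $p'=p/(p-1)$. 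Since $b>0$ does not depend on $R$, letting $R\to\infty$ gives a contradiction whenever $\sigma$ can be chosen so that $A<0$. Minimising $A$ over the admissible range of $\sigma$ — a minimisation that splits into subcases because of the positive-part truncations — and solving $A=0$ for $p$ produces two competing thresholds: $p_1(n)$, resp. $p_3(n)$, and $p_2(n,d_0)$, resp. $p_4(n,d_1)$, where the balancing value of $\sigma$ (in the $d$-variable) is precisely the positive root of the quadratic defining $d_0$, resp. $d_1$; the bound $p\leqslant\frac{n}{(n-2)_+}$ is inherited from the local well-posedness of Lemma~\ref{Lem_DKF} and dominates in high dimensions. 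For $\mu=2$ the Liouville substitution $u=(1+t)^{-1}v$ turns \eqref{Eq_Semi_Wave_Scale_Memory} into a wave equation without potential, which removes the $p_2$-type obstruction and leaves $\min\{p_1(n),\frac{n}{(n-2)_+}\}$; this case I would treat separately. When $p$ equals one of the thresholds, so $A=0$, the inequality only gives $\sup_RY_R<\infty$, and I would then refine the Hölder step by restricting the error integrals to the shells $\{t\sim R^{\sigma}\}$ and $\{|x|\sim R\}$, which replaces $Y_R^{1/p}$ by a tail $\big(\iint_{\mathrm{shells}}|u|^p\,\ml{J}_{\psi_R}\big)^{1/p}\to0$ and forces $b=0$, a contradiction.

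The hard part is the scaling bookkeeping that produces $A(\sigma)$: one must keep track at once of the temporal weight $(1+t)^{\mu}$ carried inside $\psi_R$, of the two distinct second-order losses in $t$ and $x$, of the factor $R^{\sigma(1-\gamma)}$ on $\ml{J}_{\psi_R}$ together with the degeneracy of $\ml{J}_{\psi_R}$ near the top of the time window (which is why $\kappa$ must be taken large and the shells handled with care), and then check that $A(\sigma)$ is minimised exactly at the value delivering the quadratic roots $d_0,d_1$ — and, for $\mu\in(0,1]$, that the effective dimension shifts to $n+\mu-1$ as dictated by the weight. The equality cases and the interplay with the threshold $\frac{n}{(n-2)_+}$ are the remaining delicate points.
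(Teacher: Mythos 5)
Your outline diverges from the paper's proof in two essential ways, and both cost you range of $p$, so as written it does not prove the theorem with the stated thresholds. First, the memory term: you transfer the Riemann--Liouville integral onto the test function by Fubini, obtaining the weight $\ml{J}_\psi(\tau,x)=c_\gamma\int_\tau^\infty(t-\tau)^{-\gamma}(1+t)^{\mu}\psi\,\mathrm{d}t$, and then apply H\"older against $\ml{J}_{\psi}$. The paper instead takes the test function itself to be a Riemann--Liouville \emph{derivative}, $\psi=D_{t|T}^{1-\gamma}\big(\varphi_T^{\ell}w\big)$, so that fractional integration by parts converts $N_{\gamma,p}[u]$ exactly into $\iint|u|^p g\,\varphi_T^{\ell}w$, while the error terms carry the full decay $|D_{t|T}^{k+1-\gamma}w|^{p'}\sim T^{-(k+1-\gamma)p'}$. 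In your scheme the $(1-\gamma)$ gain enters the final exponent only through $\ml{J}^{-p'/p}$, i.e.\ with the power $\tfrac{1}{p-1}$ rather than $\tfrac{p}{p-1}$; tracking the scalings, this shifts the effective dimension up by exactly $1-\gamma$. Concretely, for $\mu\in(0,1]$ with $\sigma=1$ your time-shell condition reads $p(n+\mu-1)<n+\mu+2-\gamma$, i.e.\ $p<1+\tfrac{3-\gamma}{n+\mu-1}$, which is strictly below $p_3(n)=1+\tfrac{3-\gamma}{n+\mu-2+\gamma}$ for every $\gamma\in(0,1)$. So even where your multiplier agrees with the paper's, the H\"older-against-$\ml{J}$ step is lossy and cannot reach $p_3(n)$ (nor, by the same accounting, $p_1(n)$).

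Second, the multiplier for $\mu>1$. The adjoint kernel of $\partial_t^2-\partial_t\big(\tfrac{\mu}{1+t}\,\cdot\,\big)$ is spanned by $(1+t)$ and $(1+t)^{\mu}$; your choice $(1+t)^{\mu}\chi=2(1+t)^{\mu}-(1+t)$ grows like $t^{\mu}$, whereas the paper uses the \emph{linear} solution $g(t)=\tfrac{1+t}{\mu-1}$ in the effective regime. With your weight the time-shell computation gives $p<1+\tfrac{3-\gamma}{n+\mu-1}$, a $\mu$-dependent threshold strictly below the $\mu$-independent $p_1(n)=1+\tfrac{3-\gamma}{n-1+\gamma}$ (e.g.\ $n=1$, $\gamma=\tfrac12$, $\mu=3$: your bound is $\approx1.83$ versus $p_1(1)=6$). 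Repairing this forces the linear multiplier, which is not annihilated term-by-term: a first-order remainder proportional to $\mu-2$ survives, and it is precisely the estimate of that remainder (the paper's $J_{2,T}$, with the logarithmic/algebraic correction $\ml{D}_p(T)$) that produces the second threshold $p_2(n,d_0)$, the quadratic root $d_0(n)$, and the distinguished case $\mu=2$. Your scheme has no mechanism to generate $p_2$ or $p_4$, so the assertion that minimising $A(\sigma)$ yields $d_0,d_1$ is not substantiated. A smaller, fixable issue is the lower bound on $\ml{J}_{\psi_R}$ near the top of the time window required for the H\"older step, which you flag but do not verify.
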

\subsection{Blow-up result for the hyperbolic-like model}

Next, we will show the blow-up result when $1<p<p_0(n+\mu,\gamma)$ for any $\mu\in(0,\infty)$ and $\gamma\in(0,1)$, in which $p_0(n,\gamma)$ was defined in \eqref{p0(n,g)}. It seems reasonable to derive in blow-up results an upper bound $p_0(n+\mu,\gamma)$ if $\mu$ is small since this exponent fulfills
\begin{align*}
	\lim\limits_{\gamma\to 1^-} p_0(n+\mu,\gamma)=p_{\mathrm{Str}}(n+\mu),
\end{align*}
where $p_{\mathrm{Str}}(n+\mu)$ is the conjectured critical exponent for the classical model \eqref{Eq_Semi_Wave_Scale_Power} carrying small $\mu$. A similar explanation was shown in Remark \ref{Rem_Fujita}.

Before stating our second result, let us introduce the weak solution to the Cauchy problem \eqref{Eq_Semi_Wave_Scale_Memory}.
\begin{defn}\label{Defn.Energy.Solution_2}
	Let  $u_0\in L^1_{\mathrm{loc}}(\mb{R}^n)$ and $u_1\in L^1_{\mathrm{loc}}(\mb{R}^n)$. We say that $u$ is a weak solution of \eqref{Eq_Semi_Wave_Scale_Memory} on $[0,T)$ if
	\begin{align*}
		u,\, u_t,\,  N_{\gamma,p}[u]\in L^1_{\mathrm{loc}}\big([0,T)\times\mathbb{R}^n\big)
	\end{align*}  
	and the integral relation
	\begin{align}\label{Eq.Defn.Energy.Solution_2}
		&\int_{\mb{R}^n}u_t(t,x) \psi(t,x)\mathrm{d}x- \int_{\mb{R}^n}u_1(x) \psi(0,x)\mathrm{d}x -\int_0^t\int_{\mb{R}^n}\big(u(s,x)\Delta\psi(s,x)+u_t(s,x) \psi_s(s,x)\big)\mathrm{d}x\mathrm{d}s\notag\\
		&+\int_0^t\int_{\mb{R}^n}\frac{\mu u_t(s,x)}{1+s}\psi(s,x)\mathrm{d}x\mathrm{d}s=c_\gamma\int_0^t\int_{\mb{R}^n}\psi(s,x)\int_0^s(s-\tau)^{-\gamma}|u(\tau,x)|^p\mathrm{d}\tau\mathrm{d}x\mathrm{d}s,
	\end{align}
	holds for any $\psi\in\ml{C}_c^{2}\big([0,T)\times\mb{R}^n\big)$ and any $t\in[0,T)$.
\end{defn}

  Let us begin with stating the second blow-up result to \eqref{Eq_Semi_Wave_Scale_Memory}.

\begin{theorem}\label{Thm_Blow_Up_mu_0}
	Let us assume $p\leqslant \frac{n}{(n-2)_+}$ and
	\begin{align}\label{Condition_mu_0}
		1<p< p_{0}(n+\mu,\gamma)
	\end{align}
	for all $n\geqslant 1$ and $\gamma\in(0,1)$. Let  $u_0\in H^1(\mb{R}^n)$ and $u_1\in L^2(\mb{R}^n)$ be nonnegative, nontrivial and compactly supported functions with supports contained in $B_R$ for some $R\geqslant 1$. Then, the mild solution to the Cauchy problem \eqref{Eq_Semi_Wave_Scale_Memory} blows up in finite time.
\end{theorem}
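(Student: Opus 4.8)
The plan is to run an iteration argument in the spirit of Lai--Takamura--Wakasa and Ikeda--Sobajima, but adapted to the nonlinear memory term, using a generalized Kato-type lemma for the lower bound on a suitable weighted functional. First I would introduce the Liouville transform $v(t,x):=(1+t)^{\mu/2}u(t,x)$, which converts the scale-invariant damping into a mass-like potential and yields an equation of the form $v_{tt}-\Delta v+\frac{\mu(2-\mu)}{4(1+t)^2}v=(1+t)^{\mu/2-p\mu/2}N_{\gamma,p}[\cdot]$ written in terms of $v$; more precisely $N_{\gamma,p}[u]=c_\gamma\int_0^t(t-\tau)^{-\gamma}(1+\tau)^{-p\mu/2}|v(\tau,x)|^p\,d\tau$. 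Working directly with $u$ is also possible: I would define the functional
\begin{align*}
F_0(t):=\int_{\mb{R}^n}u(t,x)\,dx,
\end{align*}
test the weak formulation \eqref{Eq.Defn.Energy.Solution_2} with a cutoff that equals $1$ on the light cone $B_{R+t}$ (legitimate by the finite speed of propagation in Lemma \ref{Lem_DKF}), and derive the ODE-type identity $\big((1+t)^\mu F_0'(t)\big)'=(1+t)^\mu\,c_\gamma\int_0^t(t-\tau)^{-\gamma}\int_{\mb{R}^n}|u(\tau,x)|^p\,dx\,d\tau$. Using Jensen's inequality together with the support bound $\mathrm{supp}\,u(t,\cdot)\subset B_{R+t}$ (so the spatial measure is $\lesssim (1+t)^n$), this closes into a nonlinear integral inequality for $F_0$; integrating the sign condition on the data ($u_0,u_1\geqslant 0$, nontrivial) gives a positive lower bound $F_0(t)\gtrsim (1+t)^{?}$ as the seed of the iteration.

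The core of the argument is the second, sharper iteration capturing the Strauss-type exponent $p_0(n+\mu,\gamma)$. For this I would use the positive test function $b(t,x)$ built from the modified Bessel function / Yordanov--Zhang function $\varphi(x)=\int_{\mathbb{S}^{n-1}}e^{x\cdot\omega}\,d\omega$ (or its hypergeometric-function refinement used by Ikeda--Sobajima), satisfying $\Delta\varphi=\varphi$, and define
\begin{align*}
F_1(t):=\int_{\mb{R}^n}u(t,x)\psi_1(t,x)\,dx,\qquad \psi_1(t,x):=e^{-t}\varphi(x),
\end{align*}
whose product with the scale-invariant structure yields a differential inequality for a weighted version of $F_1$. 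Combining the two functionals via the generalized Kato lemma (which I would first state and prove: given $G(t)\gtrsim (1+t)^a$ and a convolution-type nonlinear inequality $G''\gtrsim (1+t)^{-q}\big((t-\cdot)^{-\gamma}\ast G^p\big)(t)$ with appropriate boundary data, $G$ cannot be global once $p<p_0(n+\mu,\gamma)$), one obtains a sequence of lower bounds $F(t)\geqslant C_j(1+t)^{a_j}(\log(1+t))^{b_j}$ with exponents $a_j$ growing like a geometric series in $p$; the condition $p<p_0(n+\mu,\gamma)$ is precisely what makes the exponents blow up in finite time (with the boundary case handled by the logarithmic factor), forcing $T_{\max}<\infty$.

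The main obstacle I expect is handling the \emph{nonlinear memory convolution} inside the iteration: unlike the pure power nonlinearity $|u|^p$, here the source at time $t$ depends on the whole history through $\int_0^t(t-\tau)^{-\gamma}|u(\tau,x)|^p\,d\tau$, so the standard ``plug the lower bound back in'' step requires carefully estimating $\int_0^t(t-\tau)^{-\gamma}(1+\tau)^{a_j p}\,d\tau$ and tracking how the extra factor $(2-\gamma)$ versus $1$ (memory decay versus none) modifies the recursion for $a_j$ — this is exactly where the shifted exponent $p_0(n+\mu,\gamma)$ (root of $(n+\mu-1)p^2-(n+\mu+3-2\gamma)p-2=0$) emerges rather than $p_{\mathrm{Str}}(n+\mu)$. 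A secondary technical point is justifying that the weak solution from Lemma \ref{Lem_DKF} has enough regularity and decay for all the integrations by parts and for the exchange of $\int_0^t$ and the convolution; the compact support in $B_{R+t}$ and the assumption $p\leqslant \frac{n}{(n-2)_+}$ (ensuring $N_{\gamma,p}[u]\in L^1_{\mathrm{loc}}$) should suffice, and I would need $R\geqslant 1$ only to keep the logarithmic/power weights well-defined at $t=0$.
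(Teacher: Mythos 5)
Your proposal follows essentially the same route as the paper: the functional $F(t)=\int_{\mb{R}^n}u(t,x)\,\mathrm{d}x$, the identity $\big((1+t)^{\mu}F'(t)\big)'=(1+t)^{\mu}c_\gamma\int_0^t(t-\tau)^{-\gamma}\int_{\mb{R}^n}|u|^p\,\mathrm{d}x\,\mathrm{d}\tau$ obtained by testing with a cutoff equal to $1$ on the light cone, the reverse H\"older/Jensen step using $\mathrm{supp}\,u(t,\cdot)\subset B_{R+t}$, a Yordanov--Zhang-type seed bound, and a generalized Kato lemma driving the iteration to the quadratic $(n+\mu-1)p^2-(n+\mu+3-2\gamma)p-2<0$. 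The one point you must fix is the explicit test function: $\psi_1(t,x)=\mathrm{e}^{-t}\varphi(x)$ is the classical undamped choice and does \emph{not} solve the adjoint equation of $\partial_t^2-\Delta+\frac{\mu}{1+t}\partial_t$; taken literally it would produce the lower bound $\int|u|^p\,\mathrm{d}x\gtrsim(1+t)^{n-1-\frac{n-1}{2}p}$ and hence the unshifted exponent $p_0(n,\gamma)$ rather than $p_0(n+\mu,\gamma)$. The correct time weight (which you allude to but do not write down) is $\lambda(t)=(1+t)^{\frac{\mu+1}{2}}\ml{K}_{(\mu-1)/2}(1+t)$ built from the modified Bessel function of the second kind, whose asymptotics $\lambda(t)\sim(1+t)^{\mu/2}\mathrm{e}^{-(1+t)}$ supply exactly the extra $(1+t)^{\mu/2}$ responsible for the shift $n\mapsto n+\mu$ in the seed estimate $\int|u|^p\,\mathrm{d}x\gtrsim(1+t)^{n-1-\frac{n+\mu-1}{2}p}$; with that replacement (and dropping the unnecessary logarithmic factors, since the theorem is strictly subcritical) your argument matches the paper's.
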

\begin{remark}
	The phenomenon of shifted Strauss type exponent (see, for example, \cite{DAbbicco-Lucente-Reissig2015,Ikeda-Sobajima2018,Tu-Lin201701,Palmieri-Tu2019}) is exactly the same as those for the power nonlinearity \eqref{Eq_Semi_Wave_Scale_Power}. Precisely, concerning the subcritical case, the recent paper \cite{Chen-Palmieri201903} proved blow-up of solutions to the semilinear wave equation with memeory nonlinearity \eqref{Eq_Semi_Wave_Memory} if $1<p<p_0(n,\gamma)$ for $\gamma\in(0,1)$. In Theorem \ref{Thm_Blow_Up_mu_0}, the scale-invariant behavior can be expressed by a shifted of Strauss type exponent $p_0(n,\gamma)$ to $p_0(n+\mu,\gamma)$.
\end{remark}
\begin{remark}
	Let us consider the Cauchy problem \eqref{Eq_Semi_Wave_Scale_Memory} with $\mu=2$. By performing the Liouville transform $v(t,x):=(1+t)u(t,x)$, which satisfies
	\begin{align}\label{Eq_Wave}
		v_{tt}-\Delta v=c_{\gamma}(1+t)\int_0^t(t-\tau)^{-\gamma}(1+\tau)^{-p}|v(\tau,x)|^p\mathrm{d}\tau,
	\end{align}
	we also can prove blow-up of mild solution in the critical case $p=p_0(n+2,\gamma)$. Its proof is similar to those in \cite{Chen-Palmieri201903}. Furthermore, the main technique is an iteration method associated with auxiliary functions (proposed in \cite{Ikeda-Sobajima2018}) since the benefit of taking nonlinear wave equation \eqref{Eq_Wave} with time-dependent part only. Nevertheless, the blow-up result in the other critical cases $p=p_0(n+\mu,\gamma)$ if $\mu\neq 2$ is still open.
\end{remark}

\subsection{Some explanations for the competition: hyperbolic-like versus parabolic-like}
Let us summarize the derived results in Theorems \ref{Thm_Blow_Up_mu_1} and \ref{Thm_Blow_Up_mu_0}. By taking some conditions for initial datum, we may claim blow-up of solutions to the Cauchy problem \eqref{Eq_Semi_Wave_Scale_Memory} provided the following restrictions for the power exponent hold:
	\begin{align*}
	1<p\leqslant\begin{cases}
		\max\big\{p_1(n),p^-_0(n+\mu,\gamma)\big\}&\mbox{if}\ \ n=1,2,\\
		\max\big\{\min\{p_1(3),p_2(3,d_0)\},p^-_0(3+\mu,\gamma)\big\}&\mbox{if}\ \ n=3,\\
		\max\big\{\min\{p_2(n,d_0),\frac{n}{n-2}\},p^-_0(n+\mu,\gamma)\big\}&\mbox{if}\ \ n\geqslant4,
	\end{cases}
\end{align*}
when $\mu\in(1,\infty)\setminus\{2\}$; 
$$1<p\leq \max\big\{p_1(n),p^-_0(n+\mu,\gamma)\big\},\quad\mbox{for all}\ \ n\geqslant1,$$
when $\mu=2$; and
\begin{align*}
	1<p\leqslant\begin{cases}
		\max\big\{p_3(1),p^-_0(1+\mu,\gamma)\big\}&\mbox{if}\ \ n=1,\\
		\max\big\{\min\{p_3(2),p_4(2,d_1) \},p^-_0(2+\mu,\gamma)\big\}&\mbox{if}\ \ n=2,\\
		\max\big\{\min\{p_3(3),p_4(3,d_1), 3\},p^-_0(3+\mu,\gamma)\big\}&\mbox{if}\ \ n=3,\\
		\max\big\{\min\{p_4(n,d_1),\frac{n}{n-2}\},p^-_0(n+\mu,\gamma)\big\}&\mbox{if}\ \ n\geqslant4,
	\end{cases}
\end{align*}	
when $\mu\in(0,1]$; for all $\gamma\in(0,1)$, where $p^-_0(n+\mu,\gamma):=p_0(n+\mu,\gamma)-\epsilon_0$ for a sufficiently small constant $\epsilon_0>0$.  We underline that the competition in the above is different from those in semilinear wave equations with scale-invariant damping and power nonlinearity \eqref{Eq_Semi_Wave_Scale_Power}. 

Let us focus on the subcritical case. Concerning the memoryless Cauchy problem \eqref{Eq_Semi_Wave_Scale_Power}, as shown in the introduction, the critical exponent was conjectured by the Strauss exponent $p_{\mathrm{Str}}(n+\mu)$ if $\mu$ is small, and the Fujita exponent $p_{\mathrm{Fuj}}(n)$ if $\mu$ is large. In other words, the wave equation with scale-invariant damping and power nonlinearity is explained by a hyperbolic-like model if $\mu$ is not large, and by a parabolic-like model if $\mu$ is large. Therefore, the competition, which is determined by the value of $\mu$, between hyperbolic-like and parabolic-like appears.

Nevertheless, our results in blow-up of solutions indicate another competition which is strongly relies on the value of $\gamma$ also. A new phenomenon shows parabolic-like model for small $\mu$ instead of hyperbolic-like model, providing that we take $\gamma$ is also a small parameter. To explain this phenomenon by an 	unambiguous manner, we will concentrate on two dimensional case as an example.

Let us take $n=2$ in Theorems \ref{Thm_Blow_Up_mu_1} and \ref{Thm_Blow_Up_mu_0}. Then, we assert blow-up of solution to the Cauchy problem \eqref{Eq_Semi_Wave_Scale_Memory} if
\begin{align*}
1<p<\max\left\{\frac{4}{1+\gamma},\frac{5+\mu-2\gamma}{2(1+\mu)}+\frac{1}{2(1+\mu)}\sqrt{\mu^2+18\mu-4\mu\gamma+4\gamma^2-20\gamma+33}\right\},
\end{align*}
when $\mu\in(1,\infty)$; and
\begin{align}\label{Condition_small}
1<p<\max\left\{\min\left\{ \frac{\mu+3}{\mu+\gamma},\frac{2d_1}{(2d_1-2+\gamma)_+}\right\} ,\frac{5+\mu-2\gamma+\sqrt{\mu^2+18\mu-4\mu\gamma+4\gamma^2-20\gamma+33}}{2(1+\mu)}\right\}
\end{align}
with $d_1(2)=\frac{1}{4}+\frac{1}{2}\sqrt{\frac{1}{4}+(\mu+1)(2-\gamma)}$ when $\mu\in(0,1]$. Indeed, there is a constant $0<\bar{\gamma}_0\ll 1$ such that if $\gamma\in(0,\bar{\gamma}_0)$, the blow-up condition \eqref{Condition_small} will be reduced to
\begin{align*}
	1<p<\frac{\mu+3}{\mu+\gamma}\ \ \mbox{when} \ \ 0<\mu<\bar{\mu}_0\ll 1.
\end{align*}
Namely, the Fujita type exponent $p_3(2)=\frac{\mu+3}{\mu+\gamma}$ plays a crucial role in the blow-up result rather than the Strauss type exponent. This is the new effect that never happens in the memoryless model \eqref{Eq_Semi_Wave_Scale_Power} since $\gamma\to 1^-$ formally in the memoryless situation.

\section{Proof of Theorem \ref{Thm_Blow_Up_mu_1} via test function method}
\subsection{Preliminaries}
In this subsection, we will recall some basic definitions and useful lemmas for treatments of fractional derivatives and integration that will be used later. According to Chapter 1 in \cite{Samko-Kilbas-Marichev-1987}, the Riemann-Liouville fractional integrals and their derivatives can be shown by the next definitions.
\begin{defn}
A function $\mathcal{A} :[a,b]\rightarrow\mathbb{R}$, $-\infty<a<b<\infty$, is said to be absolutely continuous if and only if there exists $\psi\in L^1(a,b)$ such that
\[
\mathcal{A} (t)=\mathcal{A} (a)+\int_{a}^t \psi(s)\,ds.
\]
$AC[a,b]$ denotes the space of these functions.  Moreover, 
\[
AC^{k}[a,b]:=\left\{\varphi:[a,b]\rightarrow\mathbb{R}\;:\;\varphi^{(k)}\in
AC[a,b],\,\mbox{for all}\,\,k\in\mathbb{N}\right\}.
\]
\end{defn}

\begin{defn}
	Let $f\in L^1(0,T)$ with $T>0$. The Riemann-Liouville left- and right-sides fractional integrals of order $\alpha\in(0,1)$ are
	\begin{align*}
		I_{0|t}^{\alpha}f(t)&:=\frac{1}{\Gamma(\alpha)}\int_0^t(t-s)^{-(1-\alpha)}f(s)\mathrm{d}s\ \ \,\, \mbox{for}\ \ t>0,\\
		I_{t|T}^{\alpha}f(t)&:=\frac{1}{\Gamma(\alpha)}\int_t^T(s-t)^{-(1-\alpha)}f(s)\mathrm{d}s\ \ \mbox{for}\ \ t<T.
	\end{align*}
\end{defn}
\begin{defn}
	Let $f\in \mathrm{AC}[0,T]$ with $T>0$, i.e. $f$ is an absolutely continuous functions. The Riemann-Liouville left- and right-sides fractional derivatives of order $\alpha\in(0,1)$ are
	\begin{align*}
		D_{0|t}^{\alpha}f(t)&:=\frac{\mathrm{d}}{\mathrm{d}t}I_{0|t}^{1-\alpha}f(t)=\frac{1}{\Gamma(1-\alpha)}\frac{\mathrm{d}}{\mathrm{d}t}\int_0^t(t-s)^{-\alpha}f(s)\mathrm{d}s\ \ \ \ \,\, \mbox{for}\ \ t>0,\\
		D_{t|T}^{\alpha}f(t)&:=\frac{\mathrm{d}}{\mathrm{d}t}I_{t|T}^{1-\alpha}f(t)=-\frac{1}{\Gamma(1-\alpha)}\frac{\mathrm{d}}{\mathrm{d}t}\int_t^T(s-t)^{-\alpha}f(s)\mathrm{d}s\ \ \mbox{for}\ \ t<T.
	\end{align*}
\end{defn}

Now, we will show some rules in the calculation of fractional derivatives, which were introduced in the books \cite{Samko-Kilbas-Marichev-1987,Kilbas-Sri-Truj-2006}.
\begin{prop}\label{Prop_3.1}
	Let $T>0$ and $\alpha\in(0,1)$. The fractional integration by parts
	\begin{align}\label{IP}
		\int_{0}^Tf(t)D_{0|t}^{\alpha}g(t)\mathrm{d}t=\int_{0}^Tg(t)D_{t|T}^{\alpha}f(t)\mathrm{d}t
	\end{align}
	holds for every $f\in I_{t|T}^{\alpha}(L^p(0,T))$, $g\in I_{0|t}^{\alpha}(L^q(0,T))$, where $\frac{1}{p}+\frac{1}{q}\leqslant 1+\alpha$ with $p,q>1$ and
	\begin{align*}
		I_{0|t}^{\alpha}\big(L^q(0,T)\big)&:=\left\{f=I_{0|t}^{\alpha}h \ \ \mbox{for}\ \ h\in L^q(0,T)\right\},\\
		I_{t|T}^{\alpha}\big(L^p(0,T)\big)&:=\left\{f=I_{t|T}^{\alpha}h \ \  \mbox{for}\ \ h\in L^p(0,T)\right\}.
	\end{align*}
\end{prop}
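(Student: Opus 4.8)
The plan is to deduce \eqref{IP} from the elementary interchange-of-integration formula for Riemann--Liouville fractional \emph{integrals}. First I would use the hypotheses to parametrize the two functions: since $f\in I^{\alpha}_{t|T}(L^{p}(0,T))$ and $g\in I^{\alpha}_{0|t}(L^{q}(0,T))$, there are $\eta\in L^{p}(0,T)$ and $\varphi\in L^{q}(0,T)$ with $f=I^{\alpha}_{t|T}\eta$ and $g=I^{\alpha}_{0|t}\varphi$. Because $0<T<\infty$ we have $\eta,\varphi\in L^{1}(0,T)$, so the semigroup property $I^{1-\alpha}_{0|t}I^{\alpha}_{0|t}=I^{1}_{0|t}$ gives
\[
	I^{1-\alpha}_{0|t}g(t)=I^{1}_{0|t}\varphi(t)=\int_{0}^{t}\varphi(s)\,\mathrm{d}s,\qquad t\in[0,T],
\]
which is absolutely continuous; differentiating and applying Lebesgue's differentiation theorem yields $D^{\alpha}_{0|t}g(t)=\varphi(t)$ for a.e.\ $t\in(0,T)$, and in the same way $D^{\alpha}_{t|T}f(t)=\eta(t)$ a.e.\ on $(0,T)$. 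Consequently \eqref{IP} reduces to the symmetric relation
\[
	\int_{0}^{T}(I^{\alpha}_{t|T}\eta)(t)\,\varphi(t)\,\mathrm{d}t=\int_{0}^{T}(I^{\alpha}_{0|t}\varphi)(t)\,\eta(t)\,\mathrm{d}t .
\]

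Next I would establish this relation by Fubini's theorem: writing out the definitions of $I^{\alpha}_{t|T}$ and $I^{\alpha}_{0|t}$, both sides equal $\tfrac{1}{\Gamma(\alpha)}$ times the integral of $(s-t)^{\alpha-1}\eta(s)\varphi(t)$ over the triangle $\{0<t<s<T\}$, so they agree once the order of integration may be interchanged. The only substantive point is thus the absolute convergence
\[
	\int_{0}^{T}\!\!\int_{0}^{s}(s-t)^{\alpha-1}|\varphi(t)|\,|\eta(s)|\,\mathrm{d}t\,\mathrm{d}s=\Gamma(\alpha)\int_{0}^{T}(I^{\alpha}_{0|s}|\varphi|)(s)\,|\eta(s)|\,\mathrm{d}s<\infty ,
\]
which by Hölder's inequality is bounded by $\Gamma(\alpha)\,\bigl\|I^{\alpha}_{0|t}|\varphi|\bigr\|_{L^{p'}(0,T)}\,\|\eta\|_{L^{p}(0,T)}$. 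Hence it suffices that $I^{\alpha}_{0|t}$ be bounded from $L^{q}(0,T)$ to $L^{p'}(0,T)$, and this is precisely where the assumption $\tfrac1p+\tfrac1q\leqslant 1+\alpha$ enters: by the Hardy--Littlewood--Sobolev inequality on the finite interval $(0,T)$, $I^{\alpha}_{0|t}\colon L^{q}(0,T)\to L^{r}(0,T)$ is bounded whenever $r<\infty$ and $\tfrac1r\geqslant\tfrac1q-\alpha$ (and into $L^{\infty}(0,T)$ when $\tfrac1q<\alpha$); since $p>1$ and $\tfrac1p+\tfrac1q\leqslant 1+\alpha$ means exactly $\tfrac1{p'}\geqslant\tfrac1q-\alpha$, the choice $r=p'$ is admissible. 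The same estimate, applied to $f\,D^{\alpha}_{0|t}g=f\varphi$ and $g\,D^{\alpha}_{t|T}f=g\eta$, shows in addition that both integrals in \eqref{IP} are finite, so the statement is meaningful; and once the absolute convergence is in hand, Fubini applies and the identity follows at once.

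I do not anticipate a genuine obstacle here: the result is classical and is recorded in \cite{Samko-Kilbas-Marichev-1987,Kilbas-Sri-Truj-2006}, and the argument above is essentially bookkeeping. The only care needed is in the Fubini step --- the displayed inequality, which is exactly where the hypothesis $\tfrac1p+\tfrac1q\leqslant 1+\alpha$ is consumed; note that $p,q>1$ keeps us away from the genuinely delicate endpoints of Hardy--Littlewood--Sobolev, so a direct application, together with the trivial embeddings $L^{r}(0,T)\hookrightarrow L^{\tilde r}(0,T)$ for $\tilde r\leqslant r$ on the finite interval, suffices. The other mild point, dealt with above, is the verification that the chosen representatives $\varphi$ and $\eta$ really do coincide with $D^{\alpha}_{0|t}g$ and $D^{\alpha}_{t|T}f$, which rests only on the semigroup identity $I^{1-\alpha}I^{\alpha}=I^{1}$ and Lebesgue differentiation.
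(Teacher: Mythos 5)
The paper offers no proof of Proposition~\ref{Prop_3.1}: it is quoted as a classical fact from \cite{Samko-Kilbas-Marichev-1987,Kilbas-Sri-Truj-2006}, so there is nothing in the text to compare against line by line. Your argument is correct and is essentially the standard proof recorded in those references: parametrize $f=I^{\alpha}_{t|T}\eta$, $g=I^{\alpha}_{0|t}\varphi$, use $D^{\alpha}_{0|t}I^{\alpha}_{0|t}=\mathrm{id}$ (and its right-sided analogue) to reduce \eqref{IP} to the symmetric bilinear identity for the two fractional integrals, and justify the Fubini interchange by H\"older combined with the Hardy--Littlewood theorem on fractional integration, which is precisely where $\tfrac{1}{p}+\tfrac{1}{q}\leqslant 1+\alpha$ with $p,q>1$ is consumed. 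The only borderline worth stating explicitly is $\tfrac{1}{q}=\alpha$, where $I^{\alpha}_{0|t}$ fails to map into $L^{\infty}(0,T)$; your remark that $p>1$ forces $p'<\infty$ disposes of it, so the proof is complete as written.
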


\begin{prop}\label{Prop_3.2}
	Let $T>0$ and $\alpha\in(0,1)$. The following identities hold:
	\begin{align*}
		D_{0|t}^{\alpha}I_{0|t}^{\alpha}f(t)&=f(t)\ \ \mbox{a.e.}\ t\in(0,T)\ \ \mbox{for all}\ \ f\in L^p(0,T),\\
		(-1)^kD^k D_{t|T}^{\alpha}f(t)&=D_{t|T}^{k+\alpha}f(t)\ \ \,\,  \qquad\qquad \mbox{for all}\ \ f\in \mathrm{AC}^{k+1}[0,T],
	\end{align*}
	where $1\leqslant p\leqslant \infty$ and $k\in\mb{N}$.
\end{prop}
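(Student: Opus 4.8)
The plan is to reduce both identities to classical statements about Lebesgue integration, the only genuinely fractional ingredient being the composition (semigroup) law for Riemann--Liouville integrals.

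For the first identity I would unfold the definition $D_{0|t}^{\alpha}g=\frac{\mathrm d}{\mathrm dt}I_{0|t}^{1-\alpha}g$, so that $D_{0|t}^{\alpha}I_{0|t}^{\alpha}f=\frac{\mathrm d}{\mathrm dt}\,I_{0|t}^{1-\alpha}I_{0|t}^{\alpha}f$, and then establish the composition law $I_{0|t}^{1-\alpha}I_{0|t}^{\alpha}f=I_{0|t}^{1}f$ for $f\in L^p(0,T)$. The latter I would prove by writing out the double integral, applying Fubini on the simplex $\{0\le\tau\le s\le t\}$ --- legitimate since the kernel $(t-s)^{-\alpha}(s-\tau)^{\alpha-1}$ is integrable there, both exponents lying in $(-1,0)$ --- and computing the inner integral $\int_\tau^t(t-s)^{-\alpha}(s-\tau)^{\alpha-1}\,\mathrm ds$ via the substitution $s=\tau+(t-\tau)\sigma$, which turns it into the Beta integral $B(\alpha,1-\alpha)=\Gamma(\alpha)\Gamma(1-\alpha)$. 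The normalising constants $1/\Gamma(1-\alpha)$ and $1/\Gamma(\alpha)$ then cancel, leaving $I_{0|t}^{1-\alpha}I_{0|t}^{\alpha}f(t)=\int_0^t f(\tau)\,\mathrm d\tau$. Since $f\in L^p(0,T)\subset L^1_{\mathrm{loc}}(0,T)$, the Lebesgue differentiation theorem yields $\frac{\mathrm d}{\mathrm dt}\int_0^t f(\tau)\,\mathrm d\tau=f(t)$ for a.e.\ $t\in(0,T)$, which is the assertion.

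For the second identity I would argue by unwinding the definitions, the hypothesis $f\in\mathrm{AC}^{k+1}[0,T]$ serving only to guarantee the existence of the classical derivatives involved. Recall that for order $k+\alpha$ with $\alpha\in(0,1)$ one has $D_{t|T}^{k+\alpha}f=(-1)^{k+1}\frac{\mathrm d^{k+1}}{\mathrm dt^{k+1}}I_{t|T}^{1-\alpha}f$, while $D_{t|T}^{\alpha}f=-\frac{\mathrm d}{\mathrm dt}I_{t|T}^{1-\alpha}f$; hence
\[
(-1)^{k}D^{k}D_{t|T}^{\alpha}f=(-1)^{k}D^{k}\!\left(-\frac{\mathrm d}{\mathrm dt}I_{t|T}^{1-\alpha}f\right)=(-1)^{k+1}\frac{\mathrm d^{k+1}}{\mathrm dt^{k+1}}I_{t|T}^{1-\alpha}f=D_{t|T}^{k+\alpha}f.
\]
To make this rigorous one must check that $I_{t|T}^{1-\alpha}f$ is genuinely $(k+1)$-times classically differentiable on $[0,T)$; I would do this by moving derivatives off $f$, integrating by parts $k+1$ times in $I_{t|T}^{1-\alpha}f(t)=\frac1{\Gamma(1-\alpha)}\int_t^T(s-t)^{-\alpha}f(s)\,\mathrm ds$. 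This produces boundary terms proportional to $f^{(j)}(T)(T-t)^{j+1-\alpha}$ for $0\le j\le k$, each of class $C^{k+1}$ on $[0,T)$, plus a remainder $c\,I_{t|T}^{k+1-\alpha}f^{(k+1)}$, which is likewise $C^{k+1}$ because $f^{(k+1)}\in\mathrm{AC}[0,T]\subset L^\infty(0,T)$; differentiating term by term then recovers the right-hand side.

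The main obstacle is the composition law $I_{0|t}^{1-\alpha}I_{0|t}^{\alpha}=I_{0|t}^{1}$ underlying the first identity: one has to justify Fubini for $f$ merely in $L^p$ and then reduce the inner kernel integral to the Beta function. The remaining ingredients --- the Lebesgue differentiation step, and the bookkeeping of signs and derivative orders in the second identity once the $\mathrm{AC}^{k+1}$ regularity is invoked --- are routine.
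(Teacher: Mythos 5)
Your argument is correct, but it is worth noting that the paper does not actually prove Proposition \ref{Prop_3.2} at all: both identities are quoted as known facts from the standard references on fractional calculus (Samko--Kilbas--Marichev and Kilbas--Srivastava--Trujillo), so any comparison is with those sources rather than with an in-paper argument. Your route is the classical one and is sound: the first identity reduces, exactly as you say, to the semigroup law $I_{0|t}^{1-\alpha}I_{0|t}^{\alpha}f=I_{0|t}^{1}f$, which follows from Tonelli--Fubini on the simplex (the iterated integral of the absolute value equals $\Gamma(\alpha)\Gamma(1-\alpha)\int_0^t|f|$, hence is finite for $f\in L^1\supset L^p$ on a bounded interval) together with the Beta integral $B(\alpha,1-\alpha)$, after which Lebesgue differentiation gives the a.e.\ conclusion. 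The second identity is, with the standard definition $D_{t|T}^{k+\alpha}f=(-1)^{k+1}\frac{\mathrm{d}^{k+1}}{\mathrm{d}t^{k+1}}I_{t|T}^{1-\alpha}f$ (which the paper never writes down but implicitly uses, e.g.\ in \eqref{Frac_Cal_01}), essentially a tautology once one verifies that $I_{t|T}^{1-\alpha}f$ admits $k+1$ classical derivatives; your integration-by-parts verification does this. Two small points: after $k+1$ integrations by parts the remainder carries the kernel $(s-t)^{k+1-\alpha}$ and is therefore proportional to $I_{t|T}^{k+2-\alpha}f^{(k+1)}$, not $I_{t|T}^{k+1-\alpha}f^{(k+1)}$ as written (harmless for the argument); and the regularity you actually use is $f^{(k+1)}\in \mathrm{AC}[0,T]$, which matches the paper's (nonstandard, and itself slightly garbled) definition of $\mathrm{AC}^{k+1}$ but is one derivative more than the convention in Samko--Kilbas--Marichev. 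What your write-up buys over the paper's bare citation is a self-contained verification; what it costs is nothing, since every step is elementary.
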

\begin{remark}
A simple sufficient condition for functions $f$ and $g$ to satisfy \eqref{IP} is that $f,g\in \ml{C}[c,d],$ such that
$D^\alpha_{t|d}f(t),D^\alpha_{c|t}g(t)$ exist at every point $t\in[c,d]$ and are continuous.
\end{remark}
For the sake of clarity, let us consider $w=w(t)$ such that
\begin{align}\label{Defn_w(t)}
	w(t):=\left(1-\frac{t}{T}\right)^{\sigma}\ \ \mbox{for any}\ \ t\in[0,T],
\end{align}
with a large parameter $\sigma\gg 1$. According to \cite[Property 2.1]{Kilbas-Sri-Truj-2006}, its fractional derivatives fulfill
\begin{align}\label{Frac_Cal_01}
	D_{t|T}^{k+\alpha}w(t)=\frac{\Gamma(\sigma+1)}{\Gamma(\sigma+1-k-\alpha)}T^{-(k+\alpha)}\left(1-\frac{t}{T}\right)^{\sigma-(\alpha+k)}\ \ \mbox{for any}\ \ t\in[0,T],
\end{align}
where $T>0$, $\alpha\in(0,1)$ and $k\in\mb{N}$.

\begin{lemma}\label{Lem_Fraction_1}
	Let $T\gg1$, $\alpha\in(0,1)$, $k\in\mb{N}$ and $p>1$. Then, the next estimates hold:
	\begin{align}\label{Est_Lem_Frac_01}
		\int_0^T(1+t)w(t)^{-\frac{1}{p-1}}\left|D_{t|T}^{k+\alpha}w(t)\right|^{\frac{p}{p-1}}\mathrm{d}t&\lesssim T^{2-\frac{(k+\alpha)p}{p-1}},\\
				\int_0^T\big((1+t)w(t)\big)^{-\frac{1}{p-1}}\left|D_{t|T}^{k+\alpha}w(t)\right|^{\frac{p}{p-1}}\mathrm{d}t&\lesssim \ml{D}_p(T) T^{-\frac{(k+\alpha)p}{p-1}},\label{Est_Lem_Frac_02}
	\end{align}
where the time-dependent function is denoted by
\begin{align*}
\ml{D}_p(T):=\begin{cases}
T^{\frac{p-2}{p-1}}&\mbox{if}\ \ p>2,\\
\ln T&\mbox{if}\ \ p=2,\\
1&\mbox{if}\ \ p<2.
\end{cases}
\end{align*}
\end{lemma}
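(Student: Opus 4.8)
The plan is to reduce both inequalities to elementary one-dimensional integrals by inserting the closed form \eqref{Frac_Cal_01} for $D_{t|T}^{k+\alpha}w(t)$ together with $w(t)=(1-t/T)^{\sigma}$. Raising \eqref{Frac_Cal_01} to the power $\frac{p}{p-1}$ gives
\begin{align*}
\left|D_{t|T}^{k+\alpha}w(t)\right|^{\frac{p}{p-1}}=\left(\frac{\Gamma(\sigma+1)}{\Gamma(\sigma+1-k-\alpha)}\right)^{\frac{p}{p-1}}T^{-\frac{(k+\alpha)p}{p-1}}\left(1-\frac{t}{T}\right)^{(\sigma-k-\alpha)\frac{p}{p-1}},
\end{align*}
and since $\sigma$ is a fixed large parameter the Gamma-ratio is a finite positive constant independent of $T$, hence absorbed into $\lesssim$. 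Combined with $w(t)^{-1/(p-1)}=(1-t/T)^{-\sigma/(p-1)}$, the factor $T^{-(k+\alpha)p/(p-1)}$ pulls out of both integrals, and the remaining power of $(1-t/T)$ in each integrand is
\begin{align*}
-\frac{\sigma}{p-1}+(\sigma-k-\alpha)\frac{p}{p-1}=\sigma-\frac{(k+\alpha)p}{p-1}=:\beta .
\end{align*}
The same exponent $\beta$ appears in \eqref{Est_Lem_Frac_01} and \eqref{Est_Lem_Frac_02}; the only difference between the two is that \eqref{Est_Lem_Frac_02} carries an extra weight $(1+t)^{-1/(p-1)}$ which does not touch the $(1-t/T)$ part.

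Next I would fix $\sigma\gg1$ so that $\beta\geqslant0$ (possible since $k,\alpha,p$ are prescribed); then $(1-t/T)^{\beta}\leqslant1$ on $[0,T]$, and it remains to estimate $\int_0^T(1+t)(1-t/T)^{\beta}\,\mathrm{d}t$ for \eqref{Est_Lem_Frac_01} and $\int_0^T(1+t)^{-1/(p-1)}(1-t/T)^{\beta}\,\mathrm{d}t$ for \eqref{Est_Lem_Frac_02}. For the first, the substitution $s=t/T$ gives
\begin{align*}
\int_0^T(1+t)\left(1-\frac{t}{T}\right)^{\beta}\mathrm{d}t=T\int_0^1(1-s)^{\beta}\,\mathrm{d}s+T^2\int_0^1 s(1-s)^{\beta}\,\mathrm{d}s\lesssim T^2\quad\mbox{for}\ \ T\gg1,
\end{align*}
and multiplying by the extracted $T^{-(k+\alpha)p/(p-1)}$ yields the right-hand side of \eqref{Est_Lem_Frac_01}.

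For \eqref{Est_Lem_Frac_02} I would simply discard $(1-t/T)^{\beta}\leqslant1$ and evaluate $\int_0^T(1+t)^{-1/(p-1)}\,\mathrm{d}t$: it equals $\frac{p-1}{p-2}\big((1+T)^{(p-2)/(p-1)}-1\big)\lesssim T^{(p-2)/(p-1)}$ when $p>2$, equals $\ln(1+T)\lesssim\ln T$ when $p=2$, and stays bounded by the convergent value $\frac{p-1}{2-p}$ when $1<p<2$. These three regimes are precisely $\ml{D}_p(T)$, so multiplying by $T^{-(k+\alpha)p/(p-1)}$ gives \eqref{Est_Lem_Frac_02}.

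I do not expect a genuine obstacle here: this is a bookkeeping lemma, and once \eqref{Frac_Cal_01} is in hand the estimates collapse to integrals of monomials. The only care points are (i) choosing $\sigma$ large enough that the Gamma-ratio is finite and $\beta\geqslant0$, which makes the $(1-t/T)^{\beta}$ factor harmless and keeps all hidden constants independent of $T$; and (ii) noting that in \eqref{Est_Lem_Frac_02} the crude bound $(1-t/T)^{\beta}\leqslant1$ loses nothing of substance, since the genuine $T$-dependence there is dictated solely by the weight $(1+t)^{-1/(p-1)}$ — that is, by whether its exponent $1/(p-1)$ sits above, at, or below the integrability threshold $1$, equivalently $p\gtrless2$ — which is exactly what defines $\ml{D}_p(T)$.
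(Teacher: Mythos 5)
Your proposal is correct and follows essentially the same route as the paper: insert the explicit formula \eqref{Frac_Cal_01}, pull out the factor $T^{-\frac{(k+\alpha)p}{p-1}}$, use $\sigma\gg1$ to bound $(1-t/T)^{\sigma-\frac{(k+\alpha)p}{p-1}}\leqslant 1$, and reduce to the elementary integrals $\int_0^T(1+t)\,\mathrm{d}t\lesssim T^2$ and $\int_0^T(1+t)^{-\frac{1}{p-1}}\,\mathrm{d}t\lesssim\ml{D}_p(T)$. The only cosmetic difference is your explicit substitution $s=t/T$ in the first integral, which the paper skips.
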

\begin{proof}
	From the definition of $w(t)$, we have
	\begin{align*}
		\mbox{LHS of \eqref{Est_Lem_Frac_01}}\lesssim T^{-\frac{(k+\alpha)p}{p-1}}\int_0^T(1+t)\left(1-\frac{t}{T}\right)^{\sigma-\frac{(k+\alpha)p}{p-1}}\mathrm{d}t.
	\end{align*}
Concerning $\sigma\gg 1$, it holds that
	\begin{align*}
		\int_0^T(1+t)\left(1-\frac{t}{T}\right)^{\sigma-\frac{(k+\alpha)p}{p-1}}\mathrm{d}t\lesssim\int_0^T(1+t)\mathrm{d}t\lesssim T^2
	\end{align*}
	for $T\gg1$, which implies our desired estimates \eqref{Est_Lem_Frac_01}. For another estimate, with the aid of similar approach to the last one and \eqref{Frac_Cal_01}, it gives
	\begin{align*}
		\mbox{LHS of \eqref{Est_Lem_Frac_02}}&\lesssim T^{-\frac{(k+\alpha)p}{p-1}}\int_0^T(1+t)^{-\frac{1}{p-1}}\left(1-\frac{t}{T}\right)^{\sigma-\frac{(k+\alpha)p}{p-1}}\mathrm{d}t\\
		&\lesssim T^{-\frac{(k+\alpha)p}{p-1}}\int_0^T(1+t)^{-\frac{1}{p-1}}\mathrm{d}t.
	\end{align*}
	From the next fact that $		\int_0^T(1+t)^{-\frac{1}{p-1}}\mathrm{d}t\lesssim\ml{D}_p(T)$, the proof is complete.
\end{proof}

\subsection{Blow-up result in the case $\mu\in(1,\infty)$}
Let us now designate $g=g(t)$ by 
\begin{align}\label{function g}
	g(t):=\frac{t+1}{\mu-1}\ \ \mbox{for any}\ \ \mu>1,
\end{align}
so that from the equation in \eqref{Eq_Semi_Wave_Scale_Memory}, we have
\begin{align*}
	gN_{\gamma,p}[u]=(gu)_{tt}-\Delta(gu)-(g'u)_t+u_t
\end{align*}
because of $g'(t)+1=\frac{\mu}{1+t}g(t)$

Furthermore, we assume, on the contrary, that $u=u(t,x)$ is a global (in time) mild solution to \eqref{Eq_Semi_Wave_Scale_Memory}, then as mild solutions being weak solutions (see, for example, \cite{Dannawi-Kirane-Fino2018}) and regarding $g(t)\psi(t,x)$ as a test function, one can derive
\begin{align}\label{Weak_Solution_01}
	&\int_0^T\int_{\mb{R}^n}N_{\gamma,p}[u](t,x)g(t)\psi(t,x)\mathrm{d}x\mathrm{d}t-\int_{\mb{R}^n}u_0(x)g(0)\psi_t(0,x)\mathrm{d}x\notag\\
	&\qquad=-\int_{\mb{R}^n}\big(u_1(x)g(0)+u_0(x)\big)\psi(0,x)\mathrm{d}x+\int_0^T\int_{\mb{R}^n}u(t,x)g(t)\psi_{tt}(t,x)\mathrm{d}x\mathrm{d}t\notag\\
	&\qquad\quad+\int_0^T\int_{\mb{R}^n}u(t,x)\big(g'(t)-1\big)\psi_t(t,x)\mathrm{d}x\mathrm{d}t-\int_0^T\int_{\mb{R}^n}u(t,x)g(t)\Delta\psi(t,x)\mathrm{d}x\mathrm{d}t
\end{align}
for any $T\gg1$ with compactly supported function $\psi\in\ml{C}^2\big([0,T]\times\mb{R}^n\big)$ such that $\psi(T,x)=\psi_t(T,x)=0$. Then, we define the test function by separating the variables fulfilling
\begin{align*}
	\psi(t,x):=D_{t|T}^{1-\gamma}\left(\widetilde{\psi}(t,x)\right)\ \ \mbox{with}\ \ \widetilde{\psi}(t,x)=\varphi_T(x)^{\ell}w(t),
\end{align*}
where $w(t)$ was defined in \eqref{Defn_w(t)} and
\begin{align}\label{TEST_FUNCTION_NEW}
\varphi_T(x):=\varphi\left(\frac{R|x|}{T^d}\right)
\end{align}
  with positive parameters $R$ and $d$ to be determined later in each situation. Here, $\varphi$ is a radial non-increasing test function with $\varphi\in\ml{C}^{\infty}(\mb{R})$ fulfilling
\begin{align*}
	\varphi(r):=\begin{cases}
		1&\mbox{if}\ \ 0\leqslant r\leqslant 1,\\
		0&\mbox{if}\ \ r>2,
	\end{cases}
\end{align*}
moreover, $|\varphi^\prime(r)|\lesssim r^{-1}$.
\begin{remark}
Let us briefly talk about the roles of two parameters in \eqref{TEST_FUNCTION_NEW}. As usual, the parameter $R$ is chosen by the constant $1$ in the subcritical case. We may take it as a large number in the critical case in order to claim some contradictions for getting blow-up. For another, the time scaling parameter $d$ allows us to enlarge admissible range of blow-up (i.e. the range for $p$), which exerts a new influence on our result.
\end{remark}

To begin with, let us define
\begin{align*}
	I_T&:=\int_0^T\int_{B_{2T^d/R}}|u(t,x)|^pg(t)\widetilde{\psi}(t,x)\mathrm{d}x\mathrm{d}t,\\
	I_0&:=\int_{B_{2T^d/R}}\big(u_1(x)+(\mu-1)u_0(x)\big)\varphi_T(x)^{\ell}\mathrm{d}x.
\end{align*}
We notice that
\begin{align*}
	\int_0^T\int_{\mb{R}^n}N_{\gamma,p}[u](t,x)g(t)\psi(t,x)\mathrm{d}x\mathrm{d}t&=\int_0^T\int_{\mb{R}^n}I_{0|t}^{1-\gamma}(|u|^p)(t,x)g(t)D_{t|T}^{1-\gamma}\left(\widetilde{\psi}(t,x)\right)\mathrm{d}x\mathrm{d}t\\
	%&\geqslant\int_0^T\int_{\mb{R}^n}I_{0|t}^{1-\gamma}(|u|^pg)(t,x)D_{t|T}^{1-\gamma}\left(\widetilde{\psi}(t,x)\right)\mathrm{d}x\mathrm{d}t\\
	&\geqslant\int_0^T\int_{\mb{R}^n}D_{0|t}^{1-\gamma}I_{0|t}^{1-\gamma}(|u|^pg)(t,x)\widetilde{\psi}(t,x)\mathrm{d}x\mathrm{d}t=I_T
\end{align*}
by employing $g(t)\geqslant g(\tau)$ for $\tau\in[0,t]$, as well as Propositions \ref{Prop_3.1} and \ref{Prop_3.2}.\\
Then, using \eqref{Weak_Solution_01} and the derived estimates in the above, we get
\begin{align*}
	I_T + I_0
%	\lesssim-\frac{T^{-2+\gamma}}{\mu-1}\int_{B_{2T^d/R}}u_0(x)\varphi_T(x)^{\ell}\mathrm{d}x -\frac{T^{-1+\gamma}}{\mu-1}\int_{B_{2T^d/R}}\big(u_1(x)+(\mu-1)u_0(x)\big)\varphi_T(x)^{\ell}\mathrm{d}x\\
%	&\quad+\int_0^T\int_{B_{2T^d/R}}u(t,x)g(t)\psi_{tt}(t,x)\mathrm{d}x\mathrm{d}t+\int_0^T\int_{B_{2T^d/R}}u(t,x)\big(g'(t)-1\big)\psi_t(t,x)\mathrm{d}x\mathrm{d}t\\
%	&\quad-\int_0^T\int_{B_{2T^d/R}}u(t,x)g(t)\Delta\psi(t,x)\mathrm{d}x\mathrm{d}t\\
	&\lesssim-\frac{T^{-2+\gamma}}{\mu-1}\int_{B_{2T^d/R}}u_0(x)\varphi_T(x)^{\ell}\mathrm{d}x+ \int_0^T\int_{B_{2T^d/R}}|u(t,x)|g(t)\varphi_T(x)^{\ell}\left|D_{t|T}^{3-\gamma}w(t)\right|\mathrm{d}x\mathrm{d}t\\
	&\quad+|\mu-2|\int_0^T\int_{B_{2T^d/R}}|u(t,x)|\varphi_T(x)^{\ell}\left|D_{t|T}^{2-\gamma}w(t)\right|\mathrm{d}x\mathrm{d}t\\
	&\quad+\int_0^T\int_{B_{2T^d/R}}|u(t,x)|g(t)\left|\Delta\varphi_T(x)^{\ell}\right|\left|D_{t|T}^{1-\gamma}w(t)\right|\mathrm{d}x\mathrm{d}t\\
	&=:-\frac{T^{-2+\gamma}}{\mu-1}\int_{B_{2T^d/R}}u_0(x)\varphi_T(x)^{\ell}\mathrm{d}x+J_{1,T}+J_{2,T}+J_{3,T},
\end{align*}
where we used Proposition \ref{Prop_3.2} and \eqref{Frac_Cal_01}. Next, from Young's inequality, we obtain
\begin{align*}
	J_{1,T}&\lesssim\frac{I_{T}}{6}+\int_0^T\int_{B_{2T^d/R}}g(t)\varphi_T(x)^{\ell}w(t)^{-\frac{1}{p-1}}\left|D_{t|T}^{3-\gamma}w(t)\right|^{p'}\mathrm{d}x\mathrm{d}t,\\
	J_{2,T}&\lesssim\frac{I_{T}}{6}+|\mu-2|^{p'}\int_0^T\int_{B_{2T^d/R}}g(t)^{-\frac{1}{p-1}}\varphi_T(x)^{\ell}w(t)^{-\frac{1}{p-1}}\left|D_{t|T}^{2-\gamma}w(t)\right|^{p'}\mathrm{d}x\mathrm{d}t,\\
	J_{3,T}&\lesssim\frac{I_T}{6}+\int_0^t\int_{B_{2T^d/R}}g(t)\varphi_T(x)^{\ell-2p'}w(t)^{-\frac{1}{p-1}}\left(|\Delta\varphi_T(x)|^{p'}+|\nabla\varphi_T(x)|^{2p'}\right)\left|D_{t|T}^{1-\gamma}w(t)\right|^{p'}\mathrm{d}x\mathrm{d}t.
\end{align*}
In the last estimate, we used
\begin{align*}
	\Delta\varphi_T(x)^{\ell}=\ell\varphi_T(x)^{\ell-1}\Delta\varphi_T(x)+\ell(\ell-1)\varphi_T(x)^{\ell-2}|\nabla\varphi_T(x)|^{2}.
\end{align*}
Summarizing the derived estimates, we conclude
\begin{align}
	I_T+I_0&\lesssim-T^{-2+\gamma}\int_{B_{2T^d/R}}u_0(x)\varphi_T(x)^{\ell}\mathrm{d}x+ \int_0^Tg(t)w(t)^{-\frac{1}{p-1}}\left|D_{t|T}^{3-\gamma}w(t)\right|^{p'}\mathrm{d}t\ \int_{B_{2T^d/R}}\varphi_T(x)^{\ell}\mathrm{d}x\notag\\
	&\quad+|\mu-2|^{p'}\int_0^T\big(g(t)w(t)\big)^{-\frac{1}{p-1}}\left|D_{t|T}^{2-\gamma}w(t)\right|^{p'}\mathrm{d}t\ \int_{B_{2T^d/R}}\varphi_T(x)^{\ell}\mathrm{d}x\notag\\
	&\quad+\int_0^Tg(t)w(t)^{-\frac{1}{p-1}}\left|D_{t|T}^{1-\gamma}w(t)\right|^{p'}\mathrm{d}t\ \int_{B_{2T^d/R}}\varphi_T(x)^{\ell-2p'}\left(|\Delta\varphi_T(x)|^{p'}+|\nabla\varphi_T(x)|^{2p'}\right)\mathrm{d}x.\label{Eq_Summarize}
\end{align}
Obviously, the value of $\mu$ influences on the last inequality due to the fact that when $\mu=2$, the third term on the right-hand side of \eqref{Eq_Summarize} will be vanishing.

%We will divide the proof into two cases: $1<p<p_1(n,\gamma)$ and $p=p_1(n,\gamma)$.
%
In the subcritical case, we may consider $R=1$ in the test function $\varphi_T(x)$ so that $\varphi_T(x):=\varphi(|x|/T^d)$. By applying Lemma \ref{Lem_Fraction_1} in \eqref{Eq_Summarize}, it yields
\begin{align}\label{Sub_Crit_Blow_Fuji}
	I_T +I_0\lesssim T^{-2+\gamma}\|u_0\|_{L^1(\mb{R}^n)}+ T^{2-(3-\gamma)p'+nd}+T^{2-(1-\gamma)p'+nd-2dp'}+|\mu-2|^{p'}\ml{D}_p(T)T^{nd-(2-\gamma)p'}.
\end{align}
Generally speaking, if the powers of $T$ in the previous terms are negative under some conditions of the exponent $p$, we may take $T\to\infty$ so that the right-hand side of \eqref{Sub_Crit_Blow_Fuji} tending to zero. Thanks to the Lebesgue dominated convergence theorem, it yields
\begin{align*}
0<\int_{\mb{R}^n}\big(u_1(x)-(\mu-1)u_0(x)\big)\mathrm{d}x\leqslant0;
\end{align*}
contradiction. In the limit case (it means the maximum of all powers of $T$ in \eqref{Sub_Crit_Blow_Fuji} is zero), we may take $1\ll R<T$ such that $T$ and $R$ do not tend simultaneously to infinity. Consequently, there exists a positive constant $\widetilde{c}$ independent of $T$ such that
\begin{align*}
	\int_0^{\infty}\int_{\mb{R}^n}|u(t,x)|^p g(t)\mathrm{d}x\mathrm{d}t\leqslant\widetilde{c},
\end{align*}
which implies that
\begin{align}\label{Min_J3_Pre}
	\int_0^T\int_{B_{2T/R}\backslash B_{T/R}}|u(t,x)|^pg(t)\widetilde{\psi}(t,x)\mathrm{d}x\mathrm{d}t\to0\ \ \mbox{as}\ \ T\to\infty.
\end{align}
At this moment, we apply H\"older's inequality instead of Young's inequality to estimate the worst term among $J_{1,T}$, $J_{2,T},J_{3,T}$, which claims by standard test function argument that
\begin{align*}
\int_{\mb{R}^n}\big(u_1(x)-(\mu-1)u_0(x)\big)\mathrm{d}x\lesssim R^{-\tilde{\alpha}},
\end{align*}
with $\tilde{\alpha}>0$. It  implies a contradiction when $R$ is suitably large. This may complete our desired the blow-up result.

We should emphasize from the last discussion that our desired range for the exponent $p$ should satisfies $1<p\leqslant\frac{n}{(n-2)_+}$ from local (in time) existence and
\begin{align*}
\begin{cases}
2-(3-\gamma)p'+nd\leqslant0,\\
2-(1-\gamma)p'+nd-2dp'\leqslant 0,
\end{cases}
\end{align*}
as well as if $\mu\neq2$, then
\begin{align*}
\begin{cases}
\frac{p-2}{p-1}+nd-(2-\gamma)p'\leqslant0&\mbox{when}\ \ p>2,\\
nd-(2-\gamma)p'<0&\mbox{when}\ \ p=2,\\
nd-(2-\gamma)p'\leqslant 0&\mbox{when}\ \ p<2.
\end{cases}
\end{align*}
These ranges heavily rely on the suitable choice of $d$ by some elementary computations. Hence, providing that the last restrictions are fulfilled, we immediately arrive at blow-up of solutions. For the sake of readability, we next will state some results only.

Let us first consider the case $\mu\in(1,\infty)\backslash\{2\}$. Under this situation, there are three cases $n=1,2$, $n=3$, $n\geqslant 4$ that need to be considered since the comparison $2$ and $\frac{n}{(n-2)_+}$. Recalling the notations
\begin{align*}
	p_1(n):=1+\frac{3-\gamma}{n-1+\gamma}\ \ \mbox{as well as} \ \  p_2(n,d_0):=1+\frac{2-\gamma}{nd_0-2+\gamma},
\end{align*}
moreover,
\begin{align*}
	d_0(n):=\frac{1}{4}+\sqrt{\frac{1}{16}+\frac{2-\gamma}{n}},
\end{align*}
we have the following classifications.
\begin{itemize}
	\item When $n=1,2$, let us choose $d=1$ if $p>2$, and $d=d_0(n)$ if $p\leqslant 2$. Then, the blow-up result holds if $p\leqslant p_1(n)$.
	\item When $n=3$, let us choose $d=1$ if $p>2$, and $d=d_0(3)$ if $p\leqslant 2$. Then, the blow-up result holds if $p\leqslant\min\{ p_1(3),p_2(3,d_0) \}$.
	\item When $n\geqslant 4$, let us choose $d=d_0(n)$. Then, the blow-up result holds if $p\leqslant \min\{p_2(n,d_0),\frac{n}{n-2}\}$.
\end{itemize}
For the rest case $\mu=2$, the discussion becomes easier. By selecting $d=1$, then the blow-up result holds if $p\leqslant\min\{p_1(n),\frac{n}{(n-2)_+}\}$.

\subsection{Blow-up result in the case $\mu\in(0,1]$}
In this case, we will apply the similar idea as the last subsection so that we only sketch the proof. Let us replace the time-dependent function $g=g(t)$ with $g(0)=1$ in \eqref{function g} by $g(t):=(1+t)^{\mu}$, which results that
\begin{align*}
	gN_{\gamma,p}[u]=(gu)_{tt}-\Delta(gu)-(g'u)_t.
\end{align*}
By choosing the same test function $\psi=\psi(t,x)$ as the previous result for $\mu>1$, we may compute
\begin{align*}
	I_T\lesssim-T^{-2+\gamma}\int_{B_{2T^d/R}}u_0(x)\varphi_T(x)^{\ell}\mathrm{d}x-T^{-1+\gamma}\int_{B_{2T^d/R}}u_1(x)\varphi_T(x)^{\ell}\mathrm{d}x+\widetilde{J}_{1,T}+\widetilde{J}_{2,T}+\widetilde{J}_{3,T},
\end{align*}
where we defined
\begin{align*}
	\widetilde{J}_{1,T}&:=\int_0^T\int_{B_{2T^d/R}}|u(t,x)|g(t)|\psi_{tt}(t,x)|\mathrm{d}x\mathrm{d}t,\\
	\widetilde{J}_{2,T}&:=\int_0^T\int_{B_{2T^d/R}}|u(t,x)|g'(t)|\psi_t(t,x)|\mathrm{d}x\mathrm{d}t,\\
	\widetilde{J}_{3,T}&:=\int_0^T\int_{B_{2T^d/R}}|u(t,x)|g(t)|\Delta\psi(t,x)|\mathrm{d}x\mathrm{d}t.
\end{align*}
By using Young's inequality, one may find
\begin{align*}
	I_T&\lesssim T^{-1+\gamma}\left(\|u_0\|_{L^1(\mb{R}^n)}+\|u_1\|_{L^1(\mb{R}^n)}\right)+\int_0^T\int_{B_{2T^d/R}}g(t)\varphi_T(x)^{\ell}w(t)^{-\frac{1}{p-1}}\left|D_{t|T}^{3-\gamma}w(t)\right|^{p'}\mathrm{d}x\mathrm{d}t\\
	&\quad+\int_0^T\int_{B_{2T^d/R}}g(t)^{-\frac{1}{p-1}}g'(t)^{p'}\varphi_T(x)^{\ell}w(t)^{-\frac{1}{p-1}}\left|D_{t|T}^{2-\gamma}w(t)\right|^{p'}\mathrm{d}x\mathrm{d}t\\
	&\quad+\int_0^T\int_{B_{2T^d/R}}g(t)\varphi_T(x)^{\ell-2p'}w(t)^{-\frac{1}{p-1}}\left(|\Delta\varphi_T(x)|^{p'}+|\nabla\varphi_T(x)|^{2p'}\right)\left|D_{t|T}^{1-\gamma}w(t)\right|^{p'}\mathrm{d}x\mathrm{d}t.
\end{align*}
Similarly to Lemma \ref{Lem_Fraction_1}, the next inequalities can be demonstrated without additional difficulty:
\begin{align*}
	\int_0^T(1+t)^{\mu}w(t)^{-\frac{1}{p-1}}\left|D_{t|T}^{k+\alpha}w(t)\right|^{\frac{p}{p-1}}\mathrm{d}t&\lesssim T^{\mu+1-\frac{(k+\alpha)p}{p-1}},\\
	\int_0^T(1+t)^{\mu-\frac{p}{p-1}}w(t)^{-\frac{1}{p-1}}\left|D_{t|T}^{k+\alpha}w(t)\right|^{\frac{p}{p-1}}\mathrm{d}t&\lesssim \widetilde{\ml{D}}_p(T)T^{-\frac{(k+\alpha)p}{p-1}},
\end{align*}
with $k\in\mb{N}$ and $\alpha\in(0,1)$, where
\begin{align*}
	\widetilde{\ml{D}}_p(T):=\begin{cases}
		T^{\mu-\frac{1}{p-1}}&\mbox{if}\ \ p>1+\frac{1}{\mu},\\
		\ln T&\mbox{if}\ \ p=1+\frac{1}{\mu},\\
		1&\mbox{if}\ \ p<1+\frac{1}{\mu},
	\end{cases}
\end{align*}
For this reason, we proved
\begin{align*}
	I_T&\lesssim T^{-1+\gamma}\left(\|u_0\|_{L^1(\mb{R}^n)}+\|u_1\|_{L^1(\mb{R}^n)}\right)+T^{\mu+1-(1-\gamma)p'+nd-2dp'}+T^{nd+\mu+1-(3-\gamma)p'}+	\widetilde{\ml{D}}_p(T)T^{nd-(2-\gamma)p'}
\end{align*}
By the same procedure as the last part, in order to derive contradiction by taking $T\to\infty$, we need to guarantee $1<p\leqslant\frac{n}{(n-2)_+}$ from local (in time) existence and
\begin{align*}
\begin{cases}
\mu+1-(1-\gamma)p'+nd-2dp'\leqslant0,\\
nd+\mu+1-(3-\gamma)p'\leqslant0,
\end{cases}
\end{align*}
as well as
\begin{align*}
\begin{cases}
\mu-\frac{1}{p-1}+nd-(2-\gamma)p'\leqslant0&\mbox{when}\ \ p>1+\frac{1}{\mu},\\
nd-(2-\gamma)p'<0&\mbox{when}\ \ p=1+\frac{1}{\mu},\\
nd-(2-\gamma)p'\leqslant0&\mbox{when}\ \ p<1+\frac{1}{\mu}.
\end{cases}
\end{align*}
To do so, we have to compare $1+\frac{1}{\mu}$, $p_3(n)$ and $\frac{n}{(n-2)_+}$ by dividing into four cases $n=1$, $n=2$, $n=3$ and $n\geqslant 4$.  Recalling the notations
\begin{align*}
	p_3(n):=1+\frac{3-\gamma}{(n+\mu+\gamma-2)_+}\ \ \mbox{as well as} \ \  p_4(n,d_1):=1+\frac{2-\gamma}{nd_1-2+\gamma},
\end{align*}
moreover,
\begin{align*}
	d_1(n):=\frac{1}{4}+\sqrt{\frac{1}{16}+\frac{(\mu+1)(2-\gamma)}{2n}},
\end{align*}
we have the following classifications.
\begin{itemize}
	\item When $n=1$, let us choose $d=1$ if $p>1+\frac{1}{\mu}$, and $d=d_1(1)$ if $p\leqslant 1+\frac{1}{\mu}$. Then, the blow-up result holds if $1<p\leqslant p_3(1)$.
	\item When $n=2$, let us choose $d=1$ if $p>1+\frac{1}{\mu}$, and $d=d_1(2)$ if $p\leqslant 1+\frac{1}{\mu}$. Then, the blow-up result holds if $1<p\leqslant \min\{p_3(2),p_4(2,d_1)\}$.
	\item When $n=3$, let us choose $d=1$ if $p>1+\frac{1}{\mu}$, and $d=d_1(3)$ if $p\leqslant 1+\frac{1}{\mu}$. Then, the blow-up result holds if $1<p\leqslant \min\{p_3(3),p_4(3,d_1),3 \}$.
	\item When $n\geqslant 4$, let us simply choose $d=d_1(n)$. Then, the blow-up result holds providing that $1<p\leqslant \min\{p_4(n,d_1),\frac{n}{n-2}\}$.
\end{itemize}

\section{Proof of Theorem \ref{Thm_Blow_Up_mu_0} via generalized Kato's type lemma}
\subsection{Derivation of generalized Kato's type lemma}
In this subsection, we will develop generalized Kato's type lemma of the integral type to be used later, whose proof is based on the iteration argument.
\begin{lemma}\label{Generalized Kato lem}
	Let us assume that $p>1$ and $\ml{F}(t)\in\ml{C}[0,T)$ such that 
	\begin{align}
		\ml{F}(t)&\geqslant K_0(1+t)^{-\alpha_0}(t-T_0)^{\beta_0}\label{FLB 01}\\
		\ml{F}(t)&\geqslant \widetilde{K}_0(1+t)^{-a_0}\int_{T_0}^t(1+\eta)^{a_1}\int_{T_0}^{\eta}(1+s)^{a_2}\int_{T_0}^s(1+\tau)^{a_3}|\ml{F}(\tau)|^p\mathrm{d}\tau\mathrm{d}s\mathrm{d}\eta\label{FRAM 01}
	\end{align}
	for any $t\geqslant T_0\geqslant 0$, where $\alpha_0,\beta_0,a_0,\dots,a_3$ are nonnegative constants, and $K_0,\widetilde{K}_0$ are positive constants. If these parameters fulfill the relation
	\begin{align}\label{Blow-Up condition}
		(\beta_0-\alpha_0)(p-1)+a_1+a_2+a_3+3-a_0>0,
	\end{align}
	then the functional $\ml{F}(t)$ blows up in finite time.
\end{lemma}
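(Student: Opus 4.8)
The plan is to prove this generalized Kato-type lemma by a standard iteration (bootstrap) argument: we feed the lower bound of the form $\mathcal{F}(t) \geqslant C_j (1+t)^{-\alpha_j}(t-T_0)^{\beta_j}$ into the integral inequality \eqref{FRAM 01} to produce an improved lower bound of the same shape with new constants $C_{j+1}, \alpha_{j+1}, \beta_{j+1}$, and then show that under hypothesis \eqref{Blow-Up condition} the exponents $\beta_j$ grow fast enough (and $\alpha_j$ stays controlled) that the sequence of constants $C_j$ forces $\mathcal{F}$ to be infinite for $t$ large. First I would fix $t \geqslant T_0$ and work on the interval $[T_0, t]$, where on the relevant subintervals one has crude bounds like $1+\eta \lesssim 1+t$ and $\eta - T_0 \geqslant \tfrac12(t-T_0)$ on the upper half of the interval; these let me estimate the triple iterated integral $\int_{T_0}^t (1+\eta)^{a_1}\int_{T_0}^\eta (1+s)^{a_2}\int_{T_0}^s (1+\tau)^{a_3}|\mathcal{F}(\tau)|^p\,\mathrm{d}\tau\,\mathrm{d}s\,\mathrm{d}\eta$ from below. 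Substituting the inductive ansatz $|\mathcal{F}(\tau)|^p \geqslant C_j^p (1+\tau)^{-p\alpha_j}(\tau - T_0)^{p\beta_j}$, each of the three integrations raises the power of $(t-T_0)$ by one (after the $(1+\tau)$-type weights are handled), producing $\beta_{j+1} = p\beta_j + 3$ and $\alpha_{j+1} = p\alpha_j + (a_0 - a_1 - a_2 - a_3)$ up to the necessary book-keeping of which powers of $(1+t)$ get absorbed where.

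Next I would solve these linear recurrences explicitly. From $\beta_{j+1} = p\beta_j + 3$ one gets $\beta_j = p^j(\beta_0 + \tfrac{3}{p-1}) - \tfrac{3}{p-1}$, which grows like $p^j$; similarly $\alpha_j$ grows like $p^j$ times $\bigl(\alpha_0 + \tfrac{a_0 - a_1 - a_2 - a_3}{p-1}\bigr)$ (with the obvious adjustment if that combination is handled differently). The constants satisfy a recurrence of the form $C_{j+1} \geqslant \widetilde{K}_0\, c^{\,?}\, C_j^p / (\text{polynomial factors in } \beta_j, \alpha_j)$, where the polynomial denominators come from the elementary estimates $\int_0^s (\tau - T_0)^{m}\,\mathrm{d}\tau = \tfrac{(s-T_0)^{m+1}}{m+1}$, so the loss is only $O(\beta_j^3) = O(p^{3j})$. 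Taking logarithms, $\log C_{j+1} \geqslant p \log C_j - 3j \log p + O(1)$, and the standard argument shows $\log C_j \geqslant p^j \bigl(\log C_0 - S\bigr)$ for some finite constant $S$ collecting the convergent tail $\sum_k k p^{-k}$, so that $C_j \to \infty$ (at double-exponential rate) provided $\log C_0 > S$, i.e. provided $t$ is taken large enough that the initial lower bound \eqref{FLB 01} makes $C_0$ sufficiently big. Here is exactly where \eqref{Blow-Up condition} enters: rewriting the iterated lower bound as $\mathcal{F}(t) \geqslant C_j (1+t)^{-\alpha_j}(t-T_0)^{\beta_j}$ and isolating the power $(t-T_0)^{\beta_j - \alpha_j}$ (using $t-T_0 \sim 1+t$ for $t$ large), the exponent of $(1+t)$ appearing is proportional to $p^j\bigl[(\beta_0 - \alpha_0) + \tfrac{a_1+a_2+a_3+3-a_0}{p-1}\bigr]$, and \eqref{Blow-Up condition} says precisely that the bracket is positive; combined with $C_j \to \infty$ this makes the right-hand side blow up at any fixed large $t$, contradicting $\mathcal{F}(t) \in \mathcal{C}[0,T)$ unless $T < \infty$.

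The main obstacle I expect is not any single estimate but the careful tracking of which powers of $(1+t)$ versus $(t-T_0)$ are retained at each iteration step: since $a_0,\dots,a_3$ are unsigned, one must split the integrals at the midpoint $\tfrac{t+T_0}{2}$ and use $(1+\tau) \asymp (1+t)$ only on the upper half while discarding the lower half, and one must be consistent about whether the weight $(1+\eta)^{a_1}$ is bounded below by $1$ or by $(1+t)^{a_1}$ depending on the sign considerations — getting this accounting wrong changes the exponent in \eqref{Blow-Up condition}. A secondary technical point is verifying that the factor $\widetilde{K}_0^{1 + p + \cdots + p^{j-1}} = \widetilde{K}_0^{(p^j-1)/(p-1)}$ and the geometric-type factors combine with $C_0^{p^j}$ so that the whole product still diverges; this is routine once the recurrence for $\log C_j$ is set up, but it must be done cleanly. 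Finally, one should note that \eqref{FLB 01} is used only to initialize with $C_0 = K_0$, $\alpha_0, \beta_0$ as given, so no further regularity of $\mathcal{F}$ beyond continuity is needed, and the conclusion ``blows up in finite time'' means $T_{\max} < \infty$ for the maximal interval of existence.
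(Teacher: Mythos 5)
Your proposal is correct and follows essentially the same iteration scheme as the paper's proof: the same ansatz $\mathcal{F}(t)\geqslant C_j(1+t)^{-\alpha_j}(t-T_0)^{\beta_j}$, the same linear recurrences whose relevant combination $\beta_j-\alpha_j$ grows like $p^j$ times $\bigl((\beta_0-\alpha_0)(p-1)+a_1+a_2+a_3+3-a_0\bigr)/(p-1)$, and the same logarithmic bootstrap for the multiplicative constants with only an $O(p^{3j})$ polynomial loss per step, concluded by letting $j\to\infty$ at a fixed large $t$. The one remark worth making is that the midpoint splitting you anticipate is unnecessary here and would be dangerous if applied to the factor $(\tau-T_0)^{p\beta_j}$ (a loss of $2^{-p\beta_j}=2^{-O(p^{j+1})}$ per step would ruin the log-recurrence): since $a_1,a_2,a_3$ are assumed nonnegative, the paper simply uses $(1+s)^{a_k}\geqslant (s-T_0)^{a_k}$ to absorb these weights into the $\beta_j$ recurrence, while the exact integration $\int_{T_0}^{s}(\tau-T_0)^{m}\,\mathrm{d}\tau=(s-T_0)^{m+1}/(m+1)$ that you also invoke keeps the loss polynomial.
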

\begin{remark}
	In the case $a_k<0$ for $k=1,2,3$ in \eqref{FRAM 01}, we still can prove blow-up for the functional $\ml{F}(t)$. For example, when $a_3<0$, from $(1+\tau)^{a_3}\geqslant (1+t)^{a_3}$ for any $\tau\in[0,t]$, one has
	\begin{align*}
		\ml{F}(t)\geqslant \widetilde{K}_0(1+t)^{-(a_0-a_3)}\int_{T_0}^t(1+\eta)^{a_1}\int_{T_0}^{\eta}(1+s)^{a_2}\int_{T_0}^s|\ml{F}(\tau)|^p\mathrm{d}\tau\mathrm{d}s\mathrm{d}\eta.
	\end{align*}
	From Lemma \ref{Generalized Kato lem}, we can get blow-up of the functional $\ml{F}(t)$ if $(\beta_0-\alpha_0)(p-1)+a_1+a_2+3-(a_0-a_3)>0$, which is exactly the same as \eqref{Blow-Up condition}.
\end{remark}
\begin{proof}
	Motivated by the first lower bound \eqref{FLB 01}, we will demonstrate the functional $\ml{F}(t)$ having the following lower bound estimates:
	\begin{align}\label{Seq_Iter_F}
		\ml{F}(t)\geqslant K_j(1+t)^{-\alpha_j}(t-T_0)^{\beta_j},
	\end{align}
	for any $t\geqslant T_0$, where the sequences $\{K_j\}_{j\in\mb{N}}$, $\{\alpha_j\}_{j\in\mb{N}}$ and $\{\beta_j\}_{j\in\mb{N}}$ consist of nonnegative real numbers to be determined later. Clearly from our observation, the initial case when $j=0$ is given by \eqref{FLB 01}. To prove \eqref{Seq_Iter_F} by deriving the sequences, we may use an iteration procedure. Precisely, we assume \eqref{Seq_Iter_F} holding for $j$ and it still remains to do the inductive step, i.e. we will show that \eqref{Seq_Iter_F} is also valid for $j+1$.
	
	First of all, we combine \eqref{Seq_Iter_F} with \eqref{FRAM 01} to get immediately
	\begin{align*}
		\ml{F}(t)&\geqslant K_j^p\widetilde{K}_0(1+t)^{-a_0}\int_{T_0}^t(1+\eta)^{a_1}\int_{T_0}^{\eta}(1+s)^{a_2}\int_{T_0}^s(1+\tau)^{a_3-p\alpha_j}(\tau-T_0)^{p\beta_j}\mathrm{d}\tau\mathrm{d}s\mathrm{d}\eta\\
		&\geqslant K_j^p\widetilde{K}_0(1+t)^{-a_0-p\alpha_j}\int_{T_0}^t(1+\eta)^{a_1}\int_{T_0}^{\eta}(1+s)^{a_2}\int_{T_0}^s(\tau-T_0)^{a_3+p\beta_j}\mathrm{d}\tau\mathrm{d}s\mathrm{d}\eta\\
		&\geqslant \frac{K_j^p\widetilde{K}_0}{a_3+1+p\beta_j}(1+t)^{-a_0-p\alpha_j}\int_{T_0}^t(1+\eta)^{a_1}\int_{T_0}^{\eta}(s-T_0)^{a_2+a_3+1+p\beta_j}\mathrm{d}s\mathrm{d}\eta\\
		&\geqslant \frac{K_j^p\widetilde{K}_0}{(a_3+1+p\beta_j)(a_2+a_3+2+p\beta_j)}(1+t)^{-a_0-p\alpha_j}\int_{T_0}^t(\eta-T_0)^{a_1+a_2+a_3+2+p\beta_j}\mathrm{d}\eta\\
		&\geqslant \frac{K_j^p\widetilde{K}_0}{(a_1+a_2+a_3+3+p\beta_j)^3}(1+t)^{-a_0-p\alpha_j}(t-T_0)^{a_1+a_2+a_3+3+p\beta_j}
	\end{align*}
	for all $t\geqslant T_0$ and we used nonnegativities of $a_1,a_2,a_3$. Therefore, the desired estimate \eqref{Seq_Iter_F} for $j+1$ is concluded, provided that the recursive relations
	\begin{align*}
		K_{j+1}:= \frac{K_j^p\widetilde{K}_0}{(a_1+a_2+a_3+3+p\beta_j)^3},
	\end{align*}
	and $\alpha_{j+1}:= a_0+p\alpha_j$, $\beta_{j+1}:= a_1+a_2+a_3+3+p\beta_{j}$ are fulfilled.
	
	To determine the estimate for the multiplicative constant $K_j$ from the below, we should derive the explicit representation for $\alpha_j$ and $\beta_j$ in the first place. From the relations
	\begin{align}\label{Eq_Relation_alpha_beta}
		\alpha_{j}= a_0+p\alpha_{j-1}\ \ \mbox{and} \ \ \beta_{j}= a_1+a_2+a_3+3+p\beta_{j-1},
	\end{align}
	we can deduce by iteration calculations
	\begin{align}
		\alpha_j&=p^j\alpha_0 +a_0\sum\limits_{k=0}^{j-1}p^{k}=\left(\alpha_0+\frac{a_0}{p-1}\right)p^j-\frac{a_0}{p-1},\label{Seq.alpha}\\
		\beta_j&=p^j\beta_0 +(a_1+a_2+a_3+3)\sum\limits_{k=0}^{j-1}p^{k}=\left(\beta_0+\frac{a_1+a_2+a_3+3}{p-1}\right)p^j-\frac{a_1+a_2+a_3+3}{p-1}.\label{Seq.beta}
	\end{align}
	One may observe that
	\begin{align*}
		(a_1+a_2+a_3+3+p\beta_{j-1})^3=\beta_j^3\leqslant \left(\beta_0+\frac{a_1+a_2+a_3+3}{p-1}\right)^3p^{3j},
	\end{align*}
	where we used \eqref{Eq_Relation_alpha_beta} and \eqref{Seq.beta}.
	
	Then, it follows that
	\begin{align}\label{Eq_Seq_Kj}
		K_j\geqslant \widetilde{K}_0\left(\beta_0+\frac{a_1+a_2+a_3+3}{p-1}\right)^{-3}p^{-3j}K_{j-1}^p=: D p^{-3j}K_{j-1}^p
	\end{align}
	for any $j\in\mb{N}$, with a suitable constant $D>0$.
	
	In order to hit our mark, we employ the logarithmic function to both sides of \eqref{Eq_Seq_Kj} to get
	\begin{align*}
		\log K_j&\geqslant p^j\log K_0-3\log p\sum\limits_{k=0}^{j-1}(j-k)p^k+\log D\sum\limits_{k=0}^{j-1}p^k\\
		&\geqslant p^j\left(\log K_0-\frac{3p\log p}{(p-1)^2}+\frac{\log D}{p-1}\right)+\frac{3j\log p}{p-1}+\frac{3p\log p}{(p-1)^2}-\frac{\log D}{p-1}
	\end{align*}
	for any $j\in\mb{N}$, where the next well-known formula:
	\begin{align} \label{identity sum (j-k)p^k}
		\sum\limits_{k=0}^{j-1}(j-k)p^k = \frac{1}{p-1}\left(\frac{p^{j+1}-p}{p-1}-j\right)
	\end{align} was applied. Let us choose $j_0=j_0(p,a_1,a_2,a_3)$ to be the smallest positive integer such that
	\begin{align*}
		j_0\geqslant\frac{\log D}{3\log p}-\frac{p}{p-1}.
	\end{align*}
	Taking into account $j\geqslant j_0$ the inequality holds
	\begin{align}\label{Est.Kj}
		\log K_j\geqslant p^j\left(\log K_0-\frac{3p\log p}{(p-1)^2}+\frac{\log D}{p-1}\right)=:p^j\log E_0
	\end{align}
	with a suitable constant $E_0=E_0(p,a_1,a_2,a_3)>0$.

	Finally, let us associate \eqref{Seq_Iter_F}, \eqref{Seq.alpha}, \eqref{Seq.beta} with \eqref{Est.Kj}. By this way, it yields
	\begin{align*}
		\ml{F}(t)&\geqslant \exp\left[p^j\left(\log E_0-\left(\alpha_0+\frac{a_0}{p-1}\right)\log(1+t)+\left(\beta_0+\frac{a_1+a_2+a_3+3}{p-1}\right)\log (t-T_0)\right)\right]\\
		&\quad\times(1+t)^{\frac{a_0}{p-1}} (t-T_0)^{-\frac{a_1+a_2+a_3+3}{p-1}}
	\end{align*}
	for any $j\geqslant j_0$ and $t\geqslant T_0$. Let us assume $t\geqslant \max\{1,2T_0\}$, which implies $\log(1+t)\leqslant \log (2t)$ and $\log(t-T_0)\geqslant\log(t/2)$. Therefore, from the above result, we may write
	\begin{align}\label{Lower.Bound.F.Final}
		\ml{F}(t)&\geqslant \exp \left[p^j\log\left(E_0\,2^{-\alpha_0-\beta_0-\frac{a_0+a_1+a_2+a_3+3}{p-1}}\,t^{\beta_0-\alpha_0+\frac{a_1+a_2+a_3+3-a_0}{p-1}}\right)\right]\notag\\
		&\quad\times(1+t)^{\frac{a_0}{p-1}} (t-T_0)^{-\frac{a_1+a_2+a_3+3}{p-1}}
	\end{align}
	for any $j\geqslant j_0$ and $t\geqslant \max\{1,2T_0\}$. With our assumption on $p$ such that \eqref{Blow-Up condition} holds, we claim that the power for $t$ in the exponential term of \eqref{Lower.Bound.F.Final} is positive. Thus, we may find 
	\begin{align*}
		\log\left(E_0\,2^{-\alpha_0-\beta_0-\frac{a_0+a_1+a_2+a_3+3}{p-1}}\,t^{\beta_0-\alpha_0+\frac{a_1+a_2+a_3+3-a_0}{p-1}}\right)>0
	\end{align*}
	for suitably large $t\geqslant\max\{1,2T_0\}$. Letting $j\to\infty$, we observe blow-up phenomenon of the functional $\ml{F}(t)$. Thus, the proof is complete. 
\end{proof}

\subsection{Blow-up result in the case $\mu\in(0,\infty)$}
Let us introduce a time-dependent functional with respect to the solution by
\begin{align*}
 	F(t):=\int_{\mb{R}^n}u(t,x)\mathrm{d}x.	
\end{align*}
 We now take the test function in \eqref{Eq.Defn.Energy.Solution_2} satisfying $\psi\equiv1$ over the set $\{(s,x)\in[0,t]\times B_{R+s}\}$. It immediately results 
\begin{align}\label{Eq_Sec_4_01}
	&\int_{\mb{R}^n}u_t(t,x)\mathrm{d}x-\int_{\mb{R}^n}u_1(x)\mathrm{d}x+\int_0^t\frac{\mu }{1+s}\int_{\mb{R}^n}u_t(s,x)\mathrm{d}x\mathrm{d}s\notag\\
	&\qquad=c_\gamma\int_0^t\int_{\mb{R}^n}\int_0^s(s-\tau)^{-\gamma}|u(\tau,x)|^p\mathrm{d}\tau\mathrm{d}x\mathrm{d}s,
\end{align}
and differentiate \eqref{Eq_Sec_4_01} with respect to the time variable to conclude
\begin{align*}
	\int_{\mb{R}^n}u_{tt}(t,x)\mathrm{d}x+\frac{\mu }{1+t}\int_{\mb{R}^n}u_t(t,x)\mathrm{d}x=c_\gamma\int_{\mb{R}^n}\int_0^t(t-\tau)^{-\gamma}|u(\tau,x)|^p\mathrm{d}\tau\mathrm{d}x.
\end{align*}
Clearly, the previous equality can be reformulated as
\begin{align}\label{Eq_Sec_4_02}
	(1+t)^{-\mu}\big(F'(t)(1+t)^{\mu}\big)'=c_{\gamma}\int_0^t(t-\tau)^{-\gamma}\int_{\mb{R}^n}|u(\tau,x)|^p\mathrm{d}x\mathrm{d}\tau.
\end{align}
Then, multiplying  \eqref{Eq_Sec_4_02} by $(1+t)^{\mu}$ and integrating the resultant over $[0,t]$, we may see
\begin{align}\label{Eq_Pre_Frame_F3}
	F(t)\geqslant c_{\gamma}\int_0^t(1+\eta)^{-\mu}\int_0^\eta(1+s)^{\mu}\int_0^s(s-\tau)^{-\gamma}\int_{\mb{R}^n}|u(\tau,x)|^p\mathrm{d}x\mathrm{d}\tau\mathrm{d}s\mathrm{d}\eta\geqslant0,
\end{align}
where we used nonnegativities of $u_0$ and $u_1$ implying $F(0)\geqslant0$ as well as $F'(0)\geqslant0$.\\
Furthermore, by using H\"older's inequality and the support condition given by finite proposition speed, one has
\begin{align}\label{Eq_Sec_4_03}
	\int_{\mb{R}^n}|u(\tau,x)|^p\mathrm{d}x=\int_{B_{R+\tau}}|u(\tau,x)|^p\mathrm{d}x\geqslant C_0(1+\tau)^{-n(p-1)}|F(\tau)|^p,
\end{align}
with a positive constant $C_0=C_0(n,R,p)$. For this reason, the desired inequality \eqref{FRAM 01} is constructed by plugging \eqref{Eq_Sec_4_03} into \eqref{Eq_Pre_Frame_F3} so that
\begin{align}\label{Eq_Iteration_Frame}
	F(t)\geqslant C_0\,c_{\gamma}(1+t)^{-\mu-\gamma-n(p-1)}\int_0^t\int_0^\eta(1+s)^{\mu}\int_0^s|F(\tau)|^p\mathrm{d}\tau\mathrm{d}s\mathrm{d}\eta
\end{align}
for any $t\geqslant0$.

The main approach of our proof is based on Lemma \ref{Generalized Kato lem}, which needs the lower bound estimate for the functional. We are motived by the paper \cite{Yordanov-Zhang2006}, in other words, we introduce the test function $\Phi=\Phi(x)$ such that
\begin{align} \label{def eigenfunction laplace op}
	\Phi(x) :=
	\begin{cases}
		\mathrm{e}^{x}+\mathrm{e}^{-x}&\mbox{if} \ \  n=1,\\
		\displaystyle{\int_{\mathbb{S}^{n-1}} \mathrm{e}^{x\cdot \omega}\mathrm{d} \sigma_\omega}&\mbox{if} \ \ n\geqslant 2,
	\end{cases}
\end{align}
where $\mb{S}^{n-1}$ is the $n-1$ dimensional sphere. The above function is a positive smooth and fulfills the properties
\begin{align*}
	\Delta \Phi =\Phi, \ \ \mbox{as well as}\ \  \Phi (x) \sim |x|^{-\frac{n-1}{2}} \, \mathrm{e}^{|x|} \ \  \mbox{as} \ \  |x|\to \infty.
\end{align*}
According to \cite{Tu-Lin201701,Palmieri-Tu2019}, we recall the modified Bessel function of the second kind by
\begin{align*}
	\ml{K}_{\nu}(t):=\int_0^t\exp\left(-t\cosh z\right)\cosh (\nu z)\mathrm{d}z
\end{align*}
for any $\nu\in\mb{R}$, which solves the $\nu$-dependent second-order ODE
\begin{align*}
	\left(t^2\frac{\mathrm{d}^2}{\mathrm{d}t^2}+t\frac{\mathrm{d}}{\mathrm{d}t}-(t^2+\nu^2)\right)\ml{K}_{\nu}(t)=0 \ \ \mbox{with} \ \ 
	\ml{K}_{\nu}(0)=0.
\end{align*}
Recalling \cite{Erdelyi-Magnus-Oberhettinger-Tricomi1953}, the asymptotic behavior of it is showed for $t\to \infty$ as
\begin{align*}
	\ml{K}_{\nu}(t)=\sqrt{\frac{\pi}{2t}}\,\mathrm{e}^{-t}\left(1+\ml{O}(t^{-1})\right).
\end{align*}
Its derivative fulfills
\begin{align*}
	\frac{\mathrm{d}}{\mathrm{d}t}\ml{K}_{\nu}(t)=-\ml{K}_{\nu+1}(t)+\frac{\nu}{t}\ml{K}_{\nu}(t)=-\frac{1}{2}\big(\ml{K}_{\nu+1}(t)+\ml{K}_{\nu-1}(t)\big).
\end{align*}
Setting the auxiliary function
\begin{align*}
	\lambda(t):=(1+t)^{\frac{\mu+1}{2}}\ml{K}_{(\mu-1)/2}(1+t),
\end{align*}
we observe that it is the solutions to the following differential equation:
\begin{align*}
	\left((1+t)^2\frac{\mathrm{d}^2}{\mathrm{d}t^2}-\mu(1+t)\frac{\mathrm{d}}{\mathrm{d}t}+\left(\mu-(1+t)^2\right)\right)\lambda(t)=0\ \ \mbox{with} \ \
	\lambda(0)=\ml{K}_{(\mu-1)/2}(1), \ \ \lambda(\infty)=0.
\end{align*}
Let us now introduce the test function $\Psi=\Psi(t,x)$ with separate variables by
\begin{align}\label{Defn_Test_Fun_Psi}
	\Psi(t,x):=\lambda(t)\Phi(x).
\end{align}
Indeed, we find that \cite[Lemma 2.1]{Tu-Lin201701} or \cite[Lemma 2.1]{Palmieri-Tu2019} is still valid for \eqref{Eq_Semi_Wave_Scale_Memory} due to the fact that the proof of such lemma is independent of nonnegative nonlinearity $N_{\gamma,p}[u]\geqslant 0$ for any $\gamma\in(0,1)$ and $p>1$. Consequently, it holds
\begin{align}\label{Eq_Pre_Lower_Bound}
	\int_{\mb{R}^n}|u(t,x)|^p\mathrm{d}x\geqslant C_1 (1+t)^{n-1-\frac{n+\mu-1}{2}p}
\end{align}
for any $t\geqslant T_0$, where $T_0$ is a large number independent of $u_0,u_1$ and $C_1=C_1(u_0,u_1,n,p,\mu,R,\Phi)$ is a positive constant. Eventually, combining \eqref{Eq_Pre_Lower_Bound} with \eqref{Eq_Pre_Frame_F3} and using $(1+\tau)\geqslant (\tau-T_0)$ yield
\begin{align*}
	F(t)&\geqslant C_1c_{\gamma} \int_{T_0}^t(1+\eta)^{-\mu}\int_{T_0}^\eta(1+s)^{\mu}\int_{T_0}^s(s-\tau)^{-\gamma}(1+\tau)^{n-1-\frac{n+\mu-1}{2}p}\mathrm{d}\tau\mathrm{d}s\mathrm{d}\eta\\
	&\geqslant \frac{C_1 c_{\gamma}}{n}\int_{T_0}^t(1+\eta)^{-\mu}\int_{T_0}^{\eta}(1+s)^{\mu-\frac{n+\mu-1}{2}p}(s-T_0)^{-\gamma+n}\mathrm{d}s\mathrm{d}\eta\\
	&\geqslant \frac{C_1 c_{\gamma}}{n(n+\mu+1)}\int_{T_0}^t(1+\eta)^{-\mu-\frac{n+\mu-1}{2}p}(\eta-T_0)^{-\gamma+n+\mu+1}\mathrm{d}\eta\\
	&\geqslant \frac{C_1 c_{\gamma}}{n(n+\mu+1)(n+\mu+2)}(1+t)^{-\mu-\frac{n+\mu-1}{2}p}(t-T_0)^{-\gamma+n+\mu+2}
\end{align*}
for any $t\geqslant T_0$. In other words, we have already obtained the first estimate of the functional $F(t)$ from the below by
\begin{align}\label{Eq_Fisrt_F}
	F(t)\geqslant K_0(1+t)^{-\alpha_0}(t-T_0)^{\beta_0}
\end{align}
for any $t\geqslant T_0$, where the multiplicative constant is defined by
\begin{align*}
	K_0:= \frac{C_1 c_{\gamma}}{n(n+\mu+1)(n+\mu+2)}
\end{align*}
and the exponents are given by $\alpha_0:=\mu+(n+\mu-1)p/2$ and $\beta_0:= -\gamma+n+\mu+2$.

Finally, from \eqref{Eq_Iteration_Frame} and \eqref{Eq_Fisrt_F}, we apply generalized Kato's type lemma, i.e. Lemma \ref{Generalized Kato lem}, to get blow-up for the functional $F(t)$ in finite time if
\begin{align*}
	-\frac{(n+\mu)-1}{2}p^2+\left(\frac{(n+\mu)+1}{2}+1-\gamma\right)p+1>0.
\end{align*}
It completes the proof of Theorem \ref{Thm_Blow_Up_mu_0}.

\section*{Acknowledgments}
The authors would like to thank the referee for giving them helpful advice to improve this paper.

% ------------------------------------------------------------------------
\end{document}